\newtheorem{thm}{Theorem}[section]
\newtheorem{lem}[thm]{Lemma}
\newtheorem{prop}[thm]{Proposition}
\newtheorem{cor}[thm]{Corollary}
\theoremstyle{definition}
\newtheorem{dfn}[thm]{Definition}
\newtheorem{ques}[thm]{Question}
\newtheorem{rem}[thm]{Remark}
\newtheorem{conv}[thm]{Convention}
\newtheorem{ex}[thm]{Example}
\newtheorem{nota}[thm]{Notation}
\theoremstyle{remark}
\numberwithin{equation}{thm}
\def\add{\operatorname{\mathsf{add}}}
\def\ass{\operatorname{Ass}}
\def\C{\mathcal{C}}  
\def\cm{\mathsf{CM}}
\def\cokernel{\operatorname{Coker}} 
\def\depth{\operatorname{depth}}
\def\e{\operatorname{e}}
\def\edim{\operatorname{edim}}
\def\Ext{\operatorname{Ext}}
\def\ext{\operatorname{\mathsf{ext}}}
\def\G{\mathcal{G}}
\def\g{\mathrm{G}} 
\def\gp{\mathsf{GP}}
\def\ge{\geqslant}
\def\grade{\operatorname{grade}}
\def\height{\operatorname{ht}}
\def\Hom{\operatorname{Hom}}
\def\id{\operatorname{id}}
\def\image{\operatorname{Im}}
\def\ipd{\operatorname{IPD}} 
\def\le{\leqslant}
\def\m{\mathfrak{m}}
\def\Min{\operatorname{Min}} 
\def\mod{\operatorname{\mathsf{mod}}}
\def\nf{\operatorname{NF}}
\def\ng{\operatorname{NonGor}}
\def\p{\mathfrak{p}}
\def\pd{\operatorname{pd}}
\def\proj{\operatorname{\mathsf{proj}}}
\def\q{\mathfrak{q}}
\def\r{\operatorname{r}}
\def\rank{\operatorname{rank}}
\def\refl{\operatorname{\mathsf{ref}}}
\def\res{\operatorname{\mathsf{res}}}
\def\s{\mathsf{S}}
\def\sing{\operatorname{Sing}}
\def\spec{\operatorname{Spec}}
\def\ss{\mathrm{S}}
\def\supp{\operatorname{Supp}}
\def\Syz{\mathsf{Syz}}
\def\syz{\mathsf{\Omega}}
\def\tf{\mathsf{TF}}  
\def\Tor{\operatorname{Tor}}
\def\tr{\operatorname{\mathsf{Tr}}}
\def\ul{\mathsf{Ul}} 
\def\V{\operatorname{V}}
\def\X{\mathcal{X}}
\def\xx{\boldsymbol{x}}
\def\Y{\mathcal{Y}}
\def\ZZ{\mathbb{Z}}
\def\zz{\boldsymbol{z}}
\begin{document}
\allowdisplaybreaks
\title[$n$-torsionfree modules and related modules]{On the subcategories of $n$-torsionfree modules\\
and related modules}
\author{Souvik Dey}
\address[S.D.]{Department of Mathematics, University of Kansas, 1460 Jayhawk Blvd., Lawrence, KS 66045, U.S.A.}
\email{souvik@ku.edu}
\author{Ryo Takahashi}
\address[R.T.]{Graduate School of Mathematics, Nagoya University, Furocho, Chikusaku, Nagoya 464-8602, Japan}
\email{takahashi@math.nagoya-u.ac.jp}
\urladdr{https://www.math.nagoya-u.ac.jp/~takahashi/}
\thanks{2020 {\em Mathematics Subject Classification.} 13C60, 13D02}
\thanks{{\em Key words and phrases.} subcategory closed under direct summands/extensions/syzygies, $n$-torsionfree module, $n$-syzygy module, Serre's condition $(\ss_n)$, resolving subcategory, totally reflexive module, Cohen--Macaulay ring, Gorenstein ring, maximal Cohen--Macaulay module, (Auslander) transpose}
\thanks{The author was partly supported by JSPS Grant-in-Aid for Scientific Research 19K03443}
\begin{abstract}
Let $R$ be a commutative noetherian ring.
Denote by $\mod R$ the category of finitely generated $R$-modules.
In this paper, we study $n$-torsionfree modules in the sense of Auslander and Bridger, by comparing them with $n$-syzygy modules, and modules satisfying Serre's condition $(\ss_n)$.
We mainly investigate closedness properties of the full subcategories of $\mod R$ consisting of those modules.
\end{abstract}
\maketitle
\section{Introduction} 

The notion of $n$-torsionfree modules for $n\ge0$ has been introduced by Auslander and Bridger \cite{AB}, and actually plays an essential role in their wide and deep theory on the stable category of finitely generated modules over a noetherian ring.
The modules located at the center of the $n$-torsionfree modules are the totally reflexive modules, which are also called Gorenstein projective modules or modules of Gorenstein dimension zero.
So far a lot of studies have been done on $n$-torsionfree modules; we recommend the reader to check the more than 500 articles which cite \cite{AB}.

Let $R$ be a commutative noetherian ring.
Let us denote by $\mod R$ the category of finitely generated $R$-modules, by $\tf_n(R)$ the full subcategory of $n$-torsionfree modules, by $\Syz_n(R)$ the full subcategory of $n$-syzygy modules, and by $\s_n(R)$ the full subcategory of modules satisfying Serre's condition $(\ss_n)$.
The main purpose of this paper is to investigate the structure of the subcategory $\tf_n(R)$, and toward this we shall mainly compare $\tf_n(R)$ with $\Syz_n(R)$ and $\s_n(R)$.
In what follows, we explain our main results.
For simplicity, we assume that $R$ is a local ring with residue field $k$, and has dimension $d$ and depth $t$.

We begin with considering how to describe $\s_n(R)$ as the extension and resolving closures of $\tf_n(R)$ and $\Syz_n(R)$.
We obtain a couple of sufficient conditions for $\s_n(R)$ to coincide with the resolving closure of $\tf_n(R)$ as in the following theorem, which are included in Proposition \ref{3-2}(4) and Theorem \ref{tf-minimal}(1).

\begin{thm}\label{1-1}
It holds that $\s_n(R)$ is the smallest resolving subcategory of $\mod R$ containing $\tf_n(R)$, if the ring $R$ satisfies $(\ss_n)$ and either of the following holds.
\begin{enumerate}[\rm(1)]
\item
The ring $R$ is locally Gorenstein in codimension $n-2$.
\item
One has $n\le d+1$, and $R$ is locally a Cohen--Macaulay ring with minimal multiplicity on the punctured spectrum such that for each $\p\in\sing R$ there exists $M\in\tf_n(R)$ satisfying $\pd_{R_\p}M_\p=\infty$.
\end{enumerate}
\end{thm}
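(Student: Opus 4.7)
The overall plan is to split the theorem into the easy containment $\res(\tf_n(R)) \subseteq \s_n(R)$ and the essential reverse inclusion. For the easy direction I would first verify that $\s_n(R)$ is a resolving subcategory of $\mod R$ whenever $R \in \s_n(R)$: containing $R$, closure under direct summands, and closure under extensions follow directly from the definition of Serre's condition and a depth argument on short exact sequences; closure under syzygies comes from the depth lemma applied to $0 \to \syz M \to F \to M \to 0$ with $F$ free. Together with the classical inclusion $\tf_n(R) \subseteq \s_n(R)$ due to Auslander--Bridger, this yields the easy containment.

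For the reverse inclusion under hypothesis (1), the Gorenstein-in-codimension-$(n-2)$ assumption forces, for every $M \in \s_n(R)$ and every $\p$ with $\height \p \le n-2$, the localization $M_\p$ to be maximal Cohen--Macaulay over the Gorenstein local ring $R_\p$, hence totally reflexive, hence $n$-torsionfree locally. So any failure of $M$ to be $n$-torsionfree is supported in codimension $\ge n-1$. The plan is to induct on the dimension of the support of a suitable defect module, fitting $M$ at each step into a short exact sequence whose outer terms are drawn from $\tf_n(R)$ and whose third term has strictly smaller defect support; the construction of such a sequence would use a syzygy approximation or an Auslander transpose construction, so that extension and summand closure of $\res(\tf_n(R))$ absorb $M$.

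For hypothesis (2), the strategy must be structural rather than straightforward support induction. Minimal multiplicity of $R_\p$ for each $\p\in\sing R$ implies that the stable category of MCM $R_\p$-modules is essentially generated by $\syz k(\p)$, and the hypothesized $M\in\tf_n(R)$ with $\pd_{R_\p}M_\p=\infty$ guarantees that $\syz k(\p)$ sits, up to free summands, inside a sufficiently high syzygy of $M_\p$. The bound $n\le d+1$ keeps $d$-th or $(n-1)$-st syzygies inside the MCM range so the local picture actually applies. The plan is to realize any $N\in\s_n(R)$ as a member of $\res(\tf_n(R))$ by induction on $\dim\sing R$: localize at a minimal singular prime, use minimal multiplicity to match $N_\p$ with a syzygy of a multiple of $M_\p$ modulo free summands, then promote this local match to an honest short exact sequence of $R$-modules whose third term is supported in a smaller stratum of the singular locus, and iterate.

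The main obstacle, in both parts but especially in (2), is precisely this local-to-global promotion: translating pointwise comparisons of localizations into genuine short exact sequences of finitely generated $R$-modules without losing Serre-depth control on the intermediate terms. Minimal multiplicity in (2) and Gorenstein-in-codimension-$(n-2)$ in (1) are the structural hypotheses that make such gluing feasible, but organizing the inductive construction into a finite chain of extensions, summands, and syzygies relating a given $(\ss_n)$-module to $\tf_n(R)$ requires careful bookkeeping that the hypothesis $R\in\s_n(R)$ alone is insufficient to supply.
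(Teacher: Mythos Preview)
Your proposal correctly isolates the easy containment, but the reverse inclusion is where the real work lies, and here your plan has a genuine gap: you never provide a concrete mechanism for the ``local-to-global promotion'' that you yourself flag as the main obstacle. Induction on the dimension of a defect support or on $\dim\sing R$, together with an unspecified ``syzygy approximation or Auslander transpose construction,'' does not by itself manufacture short exact sequences of global $R$-modules with the required depth control. The paper does not attempt any such ad hoc gluing. Instead it invokes the machinery of \emph{dominant} resolving subcategories (Dao--Takahashi, and Takahashi's classification by moderate functions): one shows that $\X=\res\tf_n(R)$ is dominant by verifying that $\syz^n k\in\X$ and that $\syz^{\height\p}\kappa(\p)\in\add_{R_\p}\X_\p$ for every nonmaximal $\p$. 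The latter is where minimal multiplicity on the punctured spectrum enters, via the fact that over a Cohen--Macaulay local ring of minimal multiplicity any nonfree MCM module resolves the residue-field syzygy; the hypothesis $\sing R\subseteq\ipd(\tf_n(R))$ supplies such a nonfree module at each singular $\p$. Once $\X$ is dominant, a single depth-comparison criterion (for each $\p$, find $X\in\X$ with $\depth X_\p\le\depth M_\p$) forces $M\in\X$ for every $M\in\s_n(R)$; the bound $n\le d+1$ is used to produce, for primes with $\height\p>n$, an $n$-torsionfree module $Z$ with $\depth Z_\p=n$ via Auslander--Bridger's grade criterion $\grade\Ext^i(C,R)\ge i-1$.

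For part (1) you also miss a decisive simplification. Under $(\ss_n)$ and $(\g_{n-2})$ one has the \emph{equality} $\tf_n(R)=\Syz_n(R)$ (this is the Evans--Griffith/Auslander--Bridger characterization, as in Matsui--Takahashi--Tsuchiya), so the problem reduces immediately to showing $\s_n(R)=\res\Syz_n(R)$, which again follows from the dominant-subcategory argument applied to $\Syz_n(R)$. Your route through totally reflexive localizations and induction on defect support is therefore an unnecessary detour, and as written is not a proof: the inductive step (producing the third term with strictly smaller defect support while staying in $\s_n$) is asserted but not constructed.
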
 

The nature of proof of the above theorem naturally tempts us to consider the $n$-torsionfree property of syzygies of $k$, as a consequence of which we can also study when $\Syz_n(R)$ is closed under direct summands.
We obtain various equivalent conditions for $R$ to be Gorenstein in terms of $\tf_n(R)$ and $\Syz_n(R)$, which are stated in the theorem below.
This is included in Theorems \ref{syz}(3), \ref{type 1}(3), \ref{ext-local-thm} and Proposition \ref{tf-syz}(5b).

\begin{thm}\label{1-2}
The following are equivalent.
\begin{enumerate}[\rm(1)]
\item
The ring $R$ is Gorenstein.
\item
The $n$th syzygy $\syz^nk$ of the $R$-module $k$ belongs to $\tf_{t+2}(R)$ for some $n\ge t+1$.
\item
The subcategory $\tf_{t+1}(R)$ is closed under extensions.
\item
The subcategory $\Syz_n(R)$ is closed under extensions for some $n\ge t+1$.
\item
The ring $R$ is Cohen--Macaulay, and the syzygy $\syz^tk$ belongs to $\Syz_{t+2}(R)$.
\item
The ring $R$ is Cohen--Macaulay, and the subcategory $\Syz_{t+2}(R)$ is closed under direct summands.
\end{enumerate}
\end{thm}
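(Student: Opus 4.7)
The plan is to place condition (1) at the hub of all equivalences, proving (1) $\Rightarrow$ (i) and (i) $\Rightarrow$ (1) for each $i \in \{2,3,4,5,6\}$. The direction (1) $\Rightarrow$ (2)--(6) is essentially bookkeeping: a Gorenstein local ring $R$ is Cohen--Macaulay with $t = d$, and for $n \ge t$ the subcategories $\tf_n(R)$ and $\Syz_n(R)$ both coincide with the category $\cm(R)$ of maximal Cohen--Macaulay modules. This common category is resolving, hence closed under extensions and direct summands, and $\syz^n k$ lies in it for $n \ge t$. All of (2)--(6) follow immediately.

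For the converse directions, the key technical input is the identification $\Ext^i(\tr \syz^n M, R) \cong \Ext^{n+i}(M, R)$ for $i \ge 1$ (modulo projective summands). Membership of $\syz^n k$ in $\tf_m(R)$ then translates into simultaneous vanishing of Bass numbers $\mu^{n+j}(R) = 0$ for $j = 1, \ldots, m$. For (2), the choice $n \ge t+1$ and $m = t+2$ produces $t+2$ consecutive Bass numbers vanishing strictly above $\depth R$, and the Gorenstein conclusion is extracted via the paper's earlier Theorem \ref{ext-local-thm}. Condition (5) is handled analogously: the coresolution of $\syz^t k$ guaranteed by $\syz^t k \in \Syz_{t+2}(R)$ can be dualized, and under Cohen--Macaulayness this again yields Ext-vanishing above $\depth R$ of the sort that forces Gorensteinness.

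For the closure-condition directions (3), (4), and (6), the plan is to argue by contraposition: assuming $R$ is not Gorenstein, use the paper's earlier Ext-rigidity results (Theorem \ref{ext-local-thm} and Proposition \ref{tf-syz}(5b)) to produce a witness short exact sequence $0 \to R \to E \to \syz^{t+1} k \to 0$ representing a nonzero class in $\Ext^{t+2}(k, R)$. A transpose computation then shows that $E$ fails to lie in $\tf_{t+1}(R)$, while its outer terms do; this contradicts extension-closure, yielding (3). Conditions (4) and (6) follow by analogous constructions, with (6) using the direct summand $\syz^{t+1} k \oplus R$ of $\syz^{t+2}$ of an appropriately chosen module, whose failure to be a $(t+2)$-syzygy detects the non-splitting of the extension and hence the Gorenstein obstruction.

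The principal obstacle is the absence of a Cohen--Macaulay hypothesis in (2), (3), and (4): the target is a property stronger than CM, yet the depth data $t$ is only assumed to equal $\depth R$, not $\dim R$. This must either be finessed by showing that the categorical hypotheses force $t = d$ before applying the Ext-vanishing argument, or circumvented by ensuring that the Bass-rigidity input from Theorem \ref{ext-local-thm} yields Gorensteinness directly (whence Cohen--Macaulayness comes for free). A secondary subtlety in (3), (4), (6) is verifying that the constructed middle term $E$ of the representing extension genuinely fails to lie in the relevant subcategory exactly when the extension is non-split; this requires a careful analysis of how the transpose functor and syzygy functor behave on short exact sequences, and in (6) of how direct summand decompositions interact with $(t+2)$-syzygy status.
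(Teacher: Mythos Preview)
Your central technical claim is wrong. The purported identification $\Ext^i_R(\tr\syz^nM,R)\cong\Ext^{n+i}_R(M,R)$ does not hold: for any $n\ge1$ the module $\syz^nM$ is always $1$-torsionfree (it embeds in a free module), so $\Ext^1_R(\tr\syz^nM,R)=0$ regardless of whether $\Ext^{n+1}_R(M,R)$ vanishes. Hence membership of $\syz^nk$ in $\tf_m(R)$ does \emph{not} translate into vanishing of Bass numbers $\mu^{n+j}(R)$. The paper's actual mechanism for (2)$\Rightarrow$(1) is the Auslander--Bridger grade characterization: $\syz^nM\in\tf_n(R)$ forces $\grade\Ext^i_R(M,R)\ge i-1$, and when the grade exceeds $t=\depth R$ the Ext module must be zero. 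This is the step you are missing, and without it your argument for (2) collapses.

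There is also circularity in your plan for (3) and (4): you propose to invoke Theorem~\ref{ext-local-thm} and Proposition~\ref{tf-syz}(5b) as tools, but these \emph{are} the paper's proofs of (4)$\Leftrightarrow$(1) and (3)$\Leftrightarrow$(1) respectively. The paper's route for (3) is cleaner than your contrapositive extension-witness idea: extension-closure of $\tf_{t+1}(R)$ implies $\syz\tf_{t+1}(R)\subseteq\tf_{t+2}(R)$ (Proposition~\ref{tf-syz}(2)), whence $\syz^{t+2}k\in\tf_{t+2}(R)$, reducing to (2). For (5) and (6), your sketch (``dualize the coresolution'', ``direct summand of $\syz^{t+2}$ of an appropriately chosen module'') is too vague to constitute a plan; the paper's argument is a concrete socle computation (Lemma~\ref{4-2}(1)) showing that $\syz^tk\in\Syz_{t+2}(R)$ forces $\r(R)=1$, together with the observation (Lemma~\ref{4-2}(2)) that $(\syz^tk)^{\oplus\r(R)}$ is always $(t+2)$-syzygy, which links (6) to (5).
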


We also obtain the following theorem, which yields sufficient conditions for $\tf_n(R)$ to coincide with the subcategory of totally reflexive modules.
This is included in Proposition \ref{tf-syz}(5) and Corollary \ref{5-8}.

\begin{thm}\label{1-3}
It holds that $\tf_n(R)$ consists of totally reflexive modules, if either of the following holds.
\begin{enumerate}[\rm(1)]
\item
One has $n\ge t+2$, and the subcategory $\tf_n(R)$ is closed under syzygies.
\item
One has $n\ge t+1$, and the subcategory $\tf_n(R)$ is closed under extensions.
\item
One has $n\ge1$, and there is an equality $\tf_n(R)=\tf_{n+1}(R)$ of subcategories.
\end{enumerate}
\end{thm}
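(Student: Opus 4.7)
The plan is to verify each of (1), (2), (3) by showing that an arbitrary $M\in\tf_n(R)$ has Gorenstein dimension zero, i.e., $M$ is reflexive with $\Ext_R^i(M,R)=0=\Ext_R^i(M^\ast,R)$ for all $i\ge 1$. I will work throughout with the Auslander--Bridger characterization $M\in\tf_n(R)\Leftrightarrow\Ext_R^i(\tr M,R)=0$ for $1\le i\le n$, the fundamental four-term exact sequence
\[
0\to\Ext_R^1(\tr M,R)\to M\to M^{\ast\ast}\to\Ext_R^2(\tr M,R)\to 0,
\]
the shift isomorphism $\Ext_R^{i+2}(\tr M,R)\cong\Ext_R^i(M^\ast,R)$ for $i\ge 1$, and the inclusions $\tf_n(R)\subseteq\Syz_n(R)\subseteq\s_n(R)$, which force $\depth_R M=t$ whenever $M\in\tf_n(R)$ and $n\ge t$.

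For (3), I first iterate. The equality $\tf_n(R)=\tf_{n+1}(R)$ translates into: for every $R$-module $L$, vanishing of $\Ext_R^i(L,R)$ in the range $1\le i\le n$ forces $\Ext_R^{n+1}(L,R)=0$. Applying this to $\syz L$ and using $\Ext_R^i(\syz L,R)\cong\Ext_R^{i+1}(L,R)$ for $i\ge 1$, an induction on $k$ shows that the same initial vanishing in fact forces $\Ext_R^i(L,R)=0$ for all $i\ge 1$. Consequently $\tf_n(R)=\tf_m(R)$ for every $m\ge n$, so any $M\in\tf_n(R)$ satisfies $\Ext_R^i(\tr M,R)=0$ for all $i\ge 1$, which through the four-term sequence gives $M$ reflexive and through the shift isomorphism gives $\Ext_R^i(M^\ast,R)=0$ for all $i\ge 1$. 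The missing ingredient $\Ext_R^i(M,R)=0$ is extracted by replaying the argument on $M^\ast$: the dualized presentation $M^\ast\hookrightarrow P_0^\ast$ makes $M^\ast$ automatically torsionless, and reflexivity of $M$ forces $M^\ast$ reflexive, so $M^\ast\in\tf_2(R)$. For $n\le 2$ this places $M^\ast$ in the stable range $\tf_m(R)$, and applying the shift to $M^\ast$ gives $\Ext_R^i(M,R)\cong\Ext_R^{i+2}(\tr(M^\ast),R)=0$ for $i\ge 1$; for $n\ge 3$ one must further bootstrap $M^\ast$ from $\tf_2(R)$ up to $\tf_n(R)$, which requires a more careful analysis of the four-term sequence (using that the obstructing intermediate Ext groups $\Ext_R^j(M,R)$ are themselves controlled by the universal Ext-vanishing supplied by the iterated hypothesis).

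For (1) and (2), the strategy is to reduce to showing that $M$ has finite Gorenstein dimension, after which the Auslander--Bridger formula combined with $\depth_R M=t$ forces that dimension to be zero. Under (1), syzygy closure yields $\syz^kM\in\tf_n(R)$ for every $k\ge 0$, giving all syzygies controlled Ext-vanishing that, together with $n\ge t+2$, permits splicing the minimal free resolution of $M$ with the dualized resolution of $\tr M$ into a complete projective resolution of $M$. Under (2), extension closure together with $n\ge t+1$ lets one pair $M$ with a totally reflexive approximation in a short exact sequence whose third term is forced to be totally reflexive as well, witnessing finite Gorenstein dimension. The principal obstacle shared by all three parts is bridging the asymmetry between the one-sided Ext-vanishing supplied by the definition of $\tf_n(R)$, which only bounds $\Ext_R^\bullet(\tr M,R)$, and the two-sided vanishing required for total reflexivity, which must also bound $\Ext_R^\bullet(M,R)$; in (3) this gap is closed via duality by routing through $M^\ast$, while in (1) and (2) it is closed by structurally exploiting the closure hypotheses to supply the complementary cohomological information.
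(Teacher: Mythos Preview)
Your proposal has genuine gaps in all three parts, and none of the sketched arguments matches the paper's actual route.

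\textbf{Part (3).} Your iteration correctly yields $\G_{n,0}=\G_{\infty,0}$, hence $\tf_n(R)=\G_{0,\infty}$, so $M$ is reflexive and $\Ext_R^i(M^\ast,R)=0$ for all $i\ge1$. But the step you label ``bootstrap $M^\ast$ from $\tf_2(R)$ up to $\tf_n(R)$'' is circular when $n\ge3$: by the shift isomorphism, $M^\ast\in\tf_n(R)$ is equivalent to $\Ext_R^j(M,R)=0$ for $1\le j\le n-2$, which is precisely (part of) what you are trying to prove. Invoking the ``universal Ext-vanishing'' cannot help, since to apply it to $M$ you would first need $\Ext_R^i(M,R)=0$ for $1\le i\le n$. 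The paper closes this gap by passing to the transpose side: it observes that $\syz\G_{n+1,0}=\G_{n,1}\subseteq\G_{n,0}=\G_{n+1,0}$, so $\G_{n+1,0}$ is closed under syzygies, and then applies a general result (using Iyama's stable shift $\syz^{b-a}\G_{b,a}=\G_{a,b}$, which for $b-a<0$ involves \emph{cosyzygies}) to conclude $\G_{n+1,0}=\gp(R)$. Cosyzygies are the missing ingredient in your approach.

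\textbf{Part (1).} The ``splicing'' idea does not produce a complete resolution: from $\syz^kM\in\tf_n(R)$ you only get vanishing of $\Ext_R^i(\tr\syz^kM,R)$ in a bounded range, and nothing directly about $\Ext_R^i(M,R)$. The paper's argument is entirely different and local in flavor: from $\syz M\in\tf_n(R)$ and the Auslander--Bridger exact sequence $0\to\Ext_R^1(M,R)\to\tr M\to\syz\tr\syz M\to0$, one squeezes out $\Ext_R^i(\Ext_R^1(M,R),R)=0$ for $0\le i\le n-2$, so $\grade\Ext_R^1(M,R)\ge n-1>t=\depth R$, forcing $\Ext_R^1(M,R)=0$. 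Iterating over syzygies gives $M\in\G_{\infty,0}$; then $\tr M\in\G_{0,\infty}\subseteq\tf_n(R)$ and the same argument applied to $\tr M$ finishes. Your sketch contains no analogue of this grade argument.

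\textbf{Part (2).} The phrase ``pair $M$ with a totally reflexive approximation'' is not a well-defined operation here (such approximations need not exist over a general local ring), and in any case this is not how the paper proceeds. The paper first shows that extension closure of $\tf_n(R)$ forces $\syz\tf_n(R)\subseteq\tf_{n+1}(R)\subseteq\tf_n(R)$, reducing (2) to (1) when $n\ge t+2$. For the boundary case $n=t+1$, a global argument is needed: one shows $\syz^{t+1}k\in\tf_{t+1}(R)$, hence $\syz^{t+2}k\in\tf_{t+2}(R)$, which forces $R$ to be Gorenstein; then $\tf_n(R)=\gp(R)$ automatically.

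A minor point: the inclusion $\Syz_n(R)\subseteq\s_n(R)$ holds only when $R$ satisfies $(\ss_n)$; what is always true is $\Syz_n(R)\subseteq\widetilde\s_n(R)$, which still gives $\depth M\ge t$ for $n\ge t$ and is all you would need.
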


The organization of the paper is as follows.
Section 2 is devoted to preliminaries for the later sections.
In Section 3, we try to describe $\s_n(R)$ as the resolving closure of $\tf_n(R)$.
We prove Theorem \ref{1-1} and give various other related desciptions.
We also derive some partial converses to Theorem \ref{1-1}(1).
In Sections 4 and 5, we prove a much more general version of Theorem \ref{1-2}, and Theorem \ref{1-3}.
We also consider extending the implication (6) $\Rightarrow$ (1) in Theorem \ref{1-2} for $\Syz_n(R)$ with $n>t+2$, and provide both positive and negative answers.
Finally, we investigate some other subcategories related to $\tf_n(R)$ and $\s_n(R)$.

\section{Preliminaries} 

In this section, we introduce some basic notions, notations and terminologies that will be used tacitly in the later sections of the paper.

\begin{conv}
Throughout the paper, let $R$ be a commutative noetherian ring.
All modules are assumed to be finitely generated and all subcategories be strictly full.
Subscripts and superscripts may be omitted unless there is a danger of confusion.
We may identify each object $M$ of a category $\C$ with the subcategory of $\C$ consisting just of $M$.
\end{conv} 

\begin{nota}
\begin{enumerate}[(1)]
\item
The {\em singular locus} of $R$ is denoted by $\sing R$, which is defined as the set of prime ideals $\p$ of $R$ such that $R_{\p}$ is singular (i.e., nonregular).
\item
We denote by $\mod R$ the category of (finitely generated) $R$-modules, by $\proj R$ the subcategory of $\mod R$ consisting of projective $R$-modules, and by $\cm(R)$ the subcategory of $\mod R$ consisting of maximal Cohen--Macaulay $R$-modules (recall that an $R$-module $M$ is called {\em maximal Cohen--Macaulay} if $\depth M_\p=\dim R_\p$ for all $\p\in\supp M$).
\item
We denote by $(-)^\ast$ the $R$-dual functor $\Hom_R(-,R)$ from $\mod R$ to itself.
\item
Let $M$ and $N$ be $R$-modules.
We write $M \lessdot N$ to mean that $M$ is isomorphic to a direct summand of $N$.
By $M\approx N$ we mean that $M\oplus P\cong N\oplus Q$ for some projective $R$-modules $P$ and $Q$.
\end{enumerate}
\end{nota}

\begin{dfn}
Let $M$ be an $R$-module.
\begin{enumerate}[(1)]
\item
We denote by $\syz M$ the kernel of a surjective homomorphism $P\to M$ with $P\in\proj R$ and call it the {\em first syzygy} of $M$.
Note that $\syz M$ is uniquely determined up to projective summands.
For $n\ge 1$ we inductively define the {\em $n$th syzygy} $\syz^nM$ of $M$ by $\syz^n M:=\syz(\syz^{n-1}M)$, where we put $\syz^0M=M$.
\item
Let $P_1\xrightarrow{d}P_0\to M\to 0$ be a projective presentation of $M$.
We set $\tr M=\cokernel(d^\ast)$ and call it the {\em (Auslander) transpose} of $M$.
It is uniquely determined up to projective summands; see \cite{AB} for details.
\item
Suppose that $R$ is local.
Then one can take a minimal free resolution $\cdots\xrightarrow{d_2}F_1\xrightarrow{d_1}F_0\xrightarrow{d_0}M\to0$ of $M$.
We can define $\syz^nM$ and $\tr M$ as $\image d_n$ and $\cokernel(d_1^\ast)$, respectively.
Recall that a minimal free resolution is uniquely determined up to isomorphism.
{\em Whenever $R$ is local, we define syzygies and transposes by using minimal free resolutions, so that they are uniquely determined up to isomorphism.}
\end{enumerate}
\end{dfn} 

\begin{dfn}
Let $\X$ be a subcategory of $\mod R$.
We say that $\X$ is {\em closed under extensions} ({\em closed under kernels of epimorphisms}) if for an exact sequence $0\to L\to M\to N\to 0$ in $\mod R$ with $L,N\in \X$ (resp. $M,N\in\X$) it holds that $M\in \X$ (resp. $L\in\X$).
We say that $\X$ is {\em resolving} if it contains $\proj R$ and is closed under direct summands, extensions and kernels of epimorphisms.
Note that $\X$ is resolving if and only if it contains $R$ and is closed under direct summands, extensions and syzygies (since an exact sequence $0\to L\to M\to N\to0$ induces an exact sequence $0\to\syz N\to L\oplus P\to M\to0$ with $P\in\proj R$).
\end{dfn}

\begin{dfn}
Let $\X$ be a subcategory of $\mod R$.
\begin{enumerate}[(1)]
\item
Denote by $\add\X$ (resp. $\ext\X$) the {\em additive closure} (resp. {\em extension closure}) of $\X$, that is, the smallest subcategory of $\mod R$ containing $\X$ and closed under finite direct sums and direct summands (resp. closed under direct summands and extensions).
Note that $\add R=\proj R$ and $\add\X\subseteq\ext\X\subseteq\res\X$.
\item
Denote by $\syz \X$ the subcategory of $\mod R$ consisting of $R$-modules $M$ that fits into an exact sequence $0\to M \to P \to X\to 0$ in $\mod R$ with $P\in\proj R$ and $X\in \X$.
Denote by $\tr\X$ the subcategory of $\mod R$ consisting of $R$-modules of the form $\cokernel(d^\ast)$, where $d:P_1\to P_0$ is a homomorphism of projective $R$-modules such that $\cokernel d$ belong to $\X$.
For each $n\ge0$, we inductively define $\syz^n\X$ by $\syz^0\X:=\X$ and $\syz^n\X:=\syz(\syz^{n-1}\X)$.
Note that $\proj R\subseteq\syz^n\X\cap\tr\X$.
We set $\Syz_n(R)=\syz^n(\mod R)$.
Then $\Syz_n(R)$ consists of those modules $M$ that fits into an exact sequence $0\to M\to P_{n-1}\to\cdots \to P_0$ with $P_i\in\proj R$ for each $i$.
We say that an $R$-module is {\em $n$-syzygy} if it belongs to $\Syz_n(R)$.
\end{enumerate}
\end{dfn}

\begin{dfn}
Let $n\ge0$ be an integer.
We say that $R$ satisfies $(\g_n)$ if $R_{\p}$ is Gorenstein for all prime ideals $\p$ of $R$ with $\dim R_{\p}\le n$. 
We denote by $\widetilde \s_n(R)$ (resp. $\s_n(R)$) the subcategory of $\mod R$ consisting of $R$-modules $M$ satisfying Serre's condition $(\widetilde\ss_n)$ (resp. $(\ss_n)$), that is to say, $\depth M_\p\ge\min\{n,\depth R_{\p}\}$ (resp. $\depth M_\p\ge\min\{n,\dim R_{\p}\}$) for all prime ideals $\p$ of $R$.
By the depth lemma $\widetilde \s_n(R)$ is a resolving subcategory of $\mod R$ containing $\Syz_n(R)$, and $\s_n(R)=\widetilde \s_n(R)$ holds if (and only if) $R$ satisfies $(\ss_n)$.
\end{dfn} 

\begin{dfn}
Let $M$ be an $R$-module, $\X$ a subcategory of $\mod R$ and $\Phi$ a subset of $\spec R$.
\begin{enumerate}[(1)]
\item
We denote by $\ipd(M)$ the {\em infinite projective dimension locus} of $M$, that is, the set of prime ideals $\p$ of $R$ with $\pd_{R_\p}M_\p=\infty$.
We set $\ipd(\X)=\bigcup_{X\in\X}\ipd(X)$.
We denote by $\ipd^{-1}(\Phi)$ the subcategory of $\mod R$ consisting of modules $M$ with $\ipd(M)\subseteq\Phi$.
Note that $\ipd^{-1}(\Phi)$ is a resolving subcategory of $\mod R$ and $\ipd(\res\X)=\ipd(\X)\subseteq \sing R$.
Also, $\ipd (\Syz_n(R))=\sing R$ for any $n\ge 0$, as $\p\in\ipd(\syz^n_R(R/\p))$ for $\p\in\sing R$.
If $R$ satisfies $(\ss_n)$, then $\ipd (\ss_n(R))=\sing R$, since $\Syz_n(R)\subseteq \s_n(R)$.
\item
The {\em nonfree locus} $\nf(M)$ of $M$ is defined as the collection of all prime ideals $\p$ of $R$ such that $M_{\p}$ is not $R_{\p}$-free.
We put $\nf(\X)=\bigcup_{M\in \X}\nf(M)$.
It holds that $\ipd(\X)\subseteq\nf(\X)=\nf (\res \X)$.
If $R$ is Cohen--Macaulay, then $\s_n(R)=\cm(R)$ for every $n\ge\dim R$ and $\ipd (\X)=\nf(\X)$ if $\X\subseteq \cm (R)$, whence there are equalities $\ipd(\cm(R))=\nf(\cm(R))=\sing R$.
\end{enumerate}
\end{dfn}

\begin{dfn}
Let $a,b\in\ZZ_{\ge0}\cup\{\infty\}$.
By $\G_{a,b}$ we denote the subcategory of $\mod R$ consisting of $R$-modules $M$ such that $\Ext^i_R(M,R)=0=\Ext^j_R(\tr M,R)$ for all $1\le i\le a$ and $1\le j\le b$.
By definition, the $R$-modules in $\G_{\infty,\infty}$ (resp. $\G_{0,n}$ for $n\ge0$) are the {\em totally reflexive} (resp. {\em $n$-torsionfree}) $R$-modules.
We put $\gp(R)=\G_{\infty,\infty}$ and $\tf_n(R)=\G_{0,n}$.
Note that there are equalities $\tf_1(R)=\Syz_1(R)$ and $\tf_2(R)=\refl R$, where $\refl R$ stands for the subcategory of $\mod R$ consisting of reflexive $R$-modules.
\end{dfn}  

\section{Representing $\s_n(R)$ as the resolving closure of $\tf_n(R)$} 

In this section we represent the subcategories $\s_n(R)$ and $\widetilde\s_n(R)$ as the extension and resolving closures of $\tf_n(R)$ and $\Syz_n(R)$.
We begin with establishing a lemma.

\begin{lem}\label{3-1}
Let $\X$ be a subcategory of $\mod R$.
\begin{enumerate}[\rm(1)]
\item
If $\X$ contains $R$ and is closed under syzygies, then there is an equality $\ext\X=\res\X$.
\item
If $\X$ is resolving, then $\syz \X$ is closed under syzygies and direct summands.
\end{enumerate}
\end{lem}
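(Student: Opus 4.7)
For part (1), since $\ext\X\subseteq\res\X$ is immediate from the definitions, my plan is to verify that $\ext\X$ is itself resolving; as the smallest resolving subcategory containing $\X$, $\res\X$ will then be forced to equal $\ext\X$. The subcategory $\ext\X$ contains $R$ and is closed under direct summands and extensions by definition, so the only thing to check is closure under syzygies. I will introduce the auxiliary subcategory
\[
\Y:=\{M\in\mod R\mid \syz M\in\ext\X\}
\]
and show $\ext\X\subseteq\Y$. The inclusion $\X\subseteq\Y$ is immediate from the syzygy-closure hypothesis on $\X$. Closure of $\Y$ under direct summands follows from $\syz(N\oplus L)\approx \syz N\oplus \syz L$ together with the observation that $R\in\X$ forces $\proj R\subseteq\ext\X$, so $\ext\X$ absorbs projective summands. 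The extension-closure of $\Y$ is the main calculation: given $0\to A\to B\to C\to 0$ with $A,C\in\Y$, the Horseshoe Lemma produces $0\to \syz A\to \syz B\oplus P\to \syz C\to 0$ with $P\in\proj R$, and the extension and summand closure of $\ext\X$ then force $\syz B\in\ext\X$.

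For part (2), the crucial preliminary observation is that $\syz\X\subseteq\X$: if $M\in\syz\X$, then $M\approx \syz X$ for some $X\in\X$ by Schanuel's Lemma, and $\syz X\in\X$ because $\X$ is resolving, so $M\in\X$ by the summand and projective closure of $\X$. With this in hand, closure of $\syz\X$ under direct summands runs as follows. Write $M=N\oplus L\in\syz\X$ with defining sequence $0\to M\to P\to X\to 0$ and restrict the embedding to $N\hookrightarrow P$; the cokernel $P/N$ fits into a short exact sequence $0\to L\to P/N\to X\to 0$. Since $M\in\syz\X\subseteq\X$, summand-closure of $\X$ gives $L\in\X$, and extension-closure of $\X$ then yields $P/N\in\X$, so the sequence $0\to N\to P\to P/N\to 0$ exhibits $N$ as an element of $\syz\X$.

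Closure of $\syz\X$ under syzygies will follow quickly. Schanuel gives $\syz M\approx \syz^2 X=\syz(\syz X)$; since $\syz X\in\X$, the module $\syz^2 X$ lies in $\syz\X$ by definition, and after absorbing projective summands the direct-summand closure just established puts $\syz M$ in $\syz\X$. I expect the main obstacle to be the summand step in (2): without the preliminary $\syz\X\subseteq\X$ there is no obvious reason for the submodule $L$ of $P$ to lie in a subcategory amenable to the extension argument, and this embedded use of the syzygy-closure of $\X$ is what makes the whole argument go through.
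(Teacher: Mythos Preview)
Your proof is correct. Part~(1) follows essentially the same route as the paper: define an auxiliary subcategory $\Y$ and verify it contains $\X$ and is closed under summands and extensions. The only cosmetic difference is that you take $\Y=\{M:\syz M\in\ext\X\}$ while the paper uses $\Y=\{M:\syz M\in\ext\syz\X\}$, obtaining the slightly sharper inclusion $\syz(\ext\X)\subseteq\ext\syz\X$ before invoking $\syz\X\subseteq\X$.

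Part~(2) differs more substantively in the summand-closure step. The paper invokes an external result, \cite[Lemma~3.1]{syz2}, which from $0\to M\oplus N\to F\to X\to0$ manufactures auxiliary sequences $0\to M\to F\to A\to0$ and $0\to F\to A\oplus B\to X\to0$; resolvingness of $\X$ then yields $A\in\X$ and hence $M\in\syz\X$. Your argument is more elementary and self-contained: quotient $P$ by the summand $N$, use the preliminary inclusion $\syz\X\subseteq\X$ to place the complementary summand $L$ in $\X$, and close under extensions to get $P/N\in\X$. This avoids the external citation entirely. On the other hand, the paper's syzygy-closure argument is more direct than yours: once $\syz\X\subseteq\X$ is noted, the containment $\syz(\syz\X)\subseteq\syz\X$ is immediate from the definition of $\syz(-)$, with no need for Schanuel or for the summand closure you established first. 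Your detour there is harmless but unnecessary.
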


\begin{proof}
(1) It suffices to show that $\ext\X$ is closed under syzygies.
Consider the subcategory $\Y$ of $\mod R$ consisting of modules $M$ such that $\syz M\in\ext\syz\X$. 
It is easy to observe that $\Y$ contains $\X$ and is closed under direct summands and extensions.
We obtain $\syz(\ext\X)\subseteq\ext\syz\X\subseteq\ext\X$.

(2) Since $\syz \X\subseteq \X$, we have $\syz(\syz\X)\subseteq\syz\X$.
Let $0\to M\oplus N\to F\to X\to0$ be an exact sequence with $F\in\add R$ and $X\in\X$.
Then by \cite[Lemma 3.1]{syz2} we get exact sequences $0\to M\to F\to A\to0$ and $0\to F\to A\oplus B\to X\to0$.
As $\X$ is resolving, the latter exact sequence shows $A\in\X$, and then from the former we obtain $M\in\syz\X$.
Therefore, $\syz\X$ is closed under direct summands.
\end{proof} 

Let $R$ be a Cohen--Macaulay local ring.
We say that $R$ has {\em minimal multiplicity} if the equality $\e(R)=\edim R-\dim R+1$ holds.
A maximal Cohen--Macaulay $R$-module $M$ is called {\em Ulrich} if $\e(M)=\mu(M)$.
We denote by $\ul(R)$ the subcategory of $\cm(R)$ consisting of Ulrich $R$-modules.
In the proposition below we provide several descriptions as extension and resolving closures.

\begin{prop}\label{3-2}
\begin{enumerate}[\rm(1)]
\item
There are equalities $\widetilde \s_n(R)=\ext\Syz_n(R)=\res\Syz_n(R)$.
\item
If $R$ satisfies $(\ss_n)$, then one has $\s_n(R)=\ext\Syz_n(R)=\res\Syz_n(R)$.
The converse holds as well.
\item
If $R$ is Cohen--Macaulay, then $\cm(R)=\ext \Syz_n(R)=\res\Syz_n(R)$ for all $n\ge\dim R$.
\item
If $R$ satisfies $(\ss_n)$ and $(\g_{n-2})$, then the equalities $\s_n(R)=\ext \tf_{n}(R)=\res\tf_{n}(R)$ hold.
\item
If $R$ is a local Cohen--Macaulay ring of minimal multiplicity, then $\cm(R)=\ext\ul(R)=\res\ul(R)$.
\end{enumerate}
\end{prop}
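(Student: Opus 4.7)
For part (1), the inclusion $\res\Syz_n(R)\subseteq\widetilde\s_n(R)$ is immediate because $\widetilde\s_n(R)$ has already been noted to be resolving and to contain $\Syz_n(R)$. The equality $\ext\Syz_n(R)=\res\Syz_n(R)$ will follow from Lemma \ref{3-1}(1) applied to $\X=\Syz_n(R)$: one has $R\in\Syz_n(R)$, and the syzygy of an $n$-syzygy lies in $\Syz_{n+1}(R)\subseteq\Syz_n(R)$. The substantive direction is $\widetilde\s_n(R)\subseteq\res\Syz_n(R)$, which I would prove by induction on $n$. For $M\in\widetilde\s_n(R)$, the Auslander--Bridger exact sequence $0\to\Ext_R^1(\tr M,R)\to M\to M^{\ast\ast}\to\Ext_R^2(\tr M,R)\to 0$, combined with the fact that the outer Ext terms are supported in $\sing R$ and the depth bound coming from $(\widetilde\ss_n)$, lets one construct an $n$-syzygy approximation and reduce to a module with strictly smaller support. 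Part (2) is then immediate from (1) via the noted equality $\s_n(R)=\widetilde\s_n(R)$ under $(\ss_n)$, with the converse following from $R\in\Syz_n(R)\subseteq\res\Syz_n(R)=\s_n(R)$ forcing $R$ to satisfy $(\ss_n)$. Part (3) reduces to (2) since a Cohen--Macaulay ring automatically satisfies $(\ss_n)$, and $\s_n(R)=\cm(R)$ whenever $n\ge\dim R$ because the condition $\depth M_\p\ge\min\{n,\dim R_\p\}$ then collapses to $\depth M_\p\ge\dim R_\p$.

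For part (4), I would first check $\res\tf_n(R)\subseteq\s_n(R)$: by Auslander--Bridger \cite{AB} every $n$-torsionfree module satisfies $(\widetilde\ss_n)$, so under $(\ss_n)$ it lies in $\s_n(R)$, and $\s_n(R)$ is resolving. The equality $\ext\tf_n(R)=\res\tf_n(R)$ is then an application of Lemma \ref{3-1}(1) with $\X=\tf_n(R)$, since $R\in\tf_n(R)$ and $\syz M\in\tf_{n+1}(R)\subseteq\tf_n(R)$ for $M\in\tf_n(R)$ (a standard fact from \cite{AB}). The core of (4) is the inclusion $\s_n(R)\subseteq\ext\tf_n(R)$; by part (2) it suffices to establish $\Syz_n(R)\subseteq\res\tf_n(R)$. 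For $M\in\Syz_n(R)$, my plan is to use an Auslander--Bridger transform of $M$ to construct a short exact sequence $0\to N\to M'\to M\to 0$ (or dual form) in which $M'$ is $n$-torsionfree and $N$ is supported in $\sing R$. Under $(\g_{n-2})$ the ring is Gorenstein on all primes of height at most $n-2$, so the support of $N$ is pinned down sufficiently for an induction on support dimension to force $N\in\res\tf_n(R)$, yielding $M\in\res\tf_n(R)$.

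For part (5), $\res\ul(R)\subseteq\cm(R)$ is immediate from $\ul(R)\subseteq\cm(R)$ together with $\cm(R)$ being resolving under the Cohen--Macaulay hypothesis. The substantive point is $\cm(R)\subseteq\ext\ul(R)$. The plan is to invoke the structural consequence of minimal multiplicity that $\m^2=\xx\m$ for a minimal reduction $\xx=x_1,\dots,x_d$ of $\m$, and use it to produce for each $M\in\cm(R)$ a short exact sequence $0\to U_1\to U_2\to M\to 0$ with $U_1,U_2\in\ul(R)$. The existence of such a sequence rests on the abundance of Ulrich modules guaranteed by minimal multiplicity (for instance high syzygies of $R/\xx R$ or of the residue field), which one uses to absorb any free-module contribution that would otherwise escape $\ext\ul(R)$. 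Iterating this construction and combining with $\ext\ul(R)\subseteq\res\ul(R)\subseteq\cm(R)$ completes the chain of equalities.

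The main obstacles lie in the three non-formal inclusions. In (1), the inductive step of handling a module $M\in\widetilde\s_n(R)$ with nonvanishing $\Ext_R^i(\tr M,R)$ must carefully exploit the resolving closure to absorb these Ext terms. In (4), the subtle point is that under the weaker hypothesis $(\g_{n-2})$ (rather than $(\g_{n-1})$, under which $\Syz_n(R)\cap\widetilde\s_n(R)=\tf_n(R)$) the categories $\Syz_n(R)$ and $\tf_n(R)$ need not coincide, so one must work genuinely inside the resolving closure of $\tf_n(R)$ and exploit $(\g_{n-2})$ through a support induction. In (5), the technical hurdle is engineering the short exact sequence to truly live in $\ext\ul(R)$ rather than merely $\res\ul(R)$, which requires eliminating free-module summands via the Ulrich modules made available by minimal multiplicity; this is where the ring-theoretic hypothesis is used in an essential way.
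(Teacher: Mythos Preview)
Your handling of parts (2) and (3) matches the paper's. The substantive issues are in (1), (4), and to a lesser extent (5).

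\textbf{Part (4): a concrete error.} Your claim that ``$\syz M\in\tf_{n+1}(R)\subseteq\tf_n(R)$ for $M\in\tf_n(R)$ (a standard fact from \cite{AB})'' is false in general. The subcategory $\tf_n(R)$ is \emph{not} closed under syzygies without further hypotheses; indeed, a major theme later in the paper (Proposition~\ref{tf-syz}) is that closure of $\tf_n(R)$ under syzygies forces $\tf_n(R)=\gp(R)$ once $n\ge\depth R+2$. So you cannot invoke Lemma~\ref{3-1}(1) with $\X=\tf_n(R)$ directly. Moreover, your worry that $\Syz_n(R)$ and $\tf_n(R)$ ``need not coincide'' under $(\g_{n-2})$ is misplaced: the paper's proof of (4) is a one-liner citing \cite[Theorem 2.3]{MTT}, which shows that $(\ss_n)$ together with $(\g_{n-2})$ already gives $\tf_n(R)=\Syz_n(R)$, reducing (4) immediately to (2). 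Your proposed support induction is therefore working around a difficulty that does not exist.

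\textbf{Part (1): a genuinely different route, but underspecified.} The paper does not argue by induction or via the Auslander--Bridger biduality sequence. Instead it observes that $\X=\res\Syz_n(R)$ contains $\syz^n(R/\p)$ for every $\p$, hence is \emph{dominant} in the sense of \cite{crspd}, and then invokes the classification theorem \cite[Theorem~1.1]{dom}: membership in a dominant resolving subcategory is detected by the function $\p\mapsto\inf_{X\in\X}\depth X_\p$. The verification that every $M\in\widetilde\s_n(R)$ satisfies the required depth inequality is then a short computation with $X=\syz^n(R/\p)$. Your inductive sketch via the sequence $0\to\Ext^1_R(\tr M,R)\to M\to M^{**}\to\Ext^2_R(\tr M,R)\to0$ does not obviously terminate: the outer terms are supported in $\nf(M)$, not in a set that shrinks under iteration, and it is unclear how you pass from $M^{**}$ (which is $2$-torsionfree, not $n$-syzygy) to something in $\res\Syz_n(R)$.

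\textbf{Part (5).} The paper again proceeds differently: rather than building a two-term Ulrich resolution of each $M\in\cm(R)$, it uses (3) to get $\cm(R)=\ext\Syz_{d+1}(R)\subseteq\ext\syz\cm(R)$ and then cites \cite[Proposition~3.6]{umm} to show $\syz\cm(R)\subseteq\ext\ul(R)$, after first placing $R$ itself in $\ext\ul(R)$ via the exact sequence $0\to\syz^{d+2}k\to F\to\syz^{d+1}k\to0$. Your approach may be workable but would need the same cited input to produce Ulrich modules; as written it is a plan rather than an argument.
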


\begin{proof}
(1) By Lemma \ref{3-1}(1), we have $\X:=\ext\Syz_n(R)=\res\Syz_n(R)\subseteq\widetilde\s_n(R)$.
For each $\p\in\spec R$ the module $\syz^n(R/\p)$ belongs to $\X$.
Hence $\X$ is dominant in the sense of \cite{crspd}.
Fix $M\in \widetilde \s_n(R)$ and $\p\in\spec R$, so that $\depth M_\p\ge\min\{n,\depth R_\p\}$.
In view of \cite[Theorem 1.1]{dom}, it suffices to show that there exists $X\in\X$ with $\depth M_\p\ge\depth X_\p$.
If $\depth R_{\p} \le n$, then $\depth M_\p\ge\depth R_{\p}$, and we are done since $R\in\X$. 
Now suppose $\depth R_{\p} >n$. 
Then $\depth M_\p\ge n < \depth R_\p$.
Setting $X=\syz^n(R/\p)$, we have $X\in\Syz_n(R)\subseteq\X$ and $X_\p\cong\syz^n\kappa(\p)\oplus R_\p^{\oplus a}$ for some $a\ge0$.
As $\depth R_\p > n$ and $\depth \kappa(\p)=0$, we have $\depth \syz^n \kappa(\p)=n$, and so we get
$$
\depth X_\p=
\begin{cases}
\depth\syz^n\kappa(\p)=n & \text{if }a=0,\\
\inf\{\depth R_\p,\,\depth\syz^n\kappa(\p)\}
=\inf\{\depth R_\p,\,n\}=n & \text{if }a>0.
\end{cases}
$$
Thus $\depth M_\p\ge n=\depth X_\p$, and the proof is completed. 

(2) The first assertion follows from (1).
The second assertion holds since we have $R\in\Syz_n(R)\subseteq\ext\Syz_n(R)=\s_n(R)$.

(3) We have $R\in\s_n(R)$ for $n\ge0$ and $\cm(R)=\s_n(R)$ for $n\ge \dim R$.
The assertion follows by (2).

(4) We may assume $n\ge1$.
By \cite[Theorem 2.3(2)$\Rightarrow$(6)]{MTT} we have $\tf_n(R)=\Syz_n(R)$.
Apply (2).

(5) Put $d=\dim R$.
We have $\ext\ul(R)\subseteq\res\ul(R)\subseteq\cm(R)=\ext\Syz_{d+1}(R)\subseteq\ext\syz\cm(R)$, where the equality follows from (3).
It thus suffices to show $\syz\cm(R)\subseteq\ext\ul(R)$, and for this we may assume that $R$ is singular.
Let $k$ be the residue field of $R$.
Take an exact sequence $0\to \syz^{d+2} k \to F \to \syz^{d+1} k \to 0$ with $F$ nonzero and free.
The modules $\syz^{d+1}k,\syz^{d+2}k\in\syz\cm(R)$ have no nonzero free summands by \cite[Corollary 1.3]{D}.
It follows from \cite[Proposition 3.6]{umm} that $\syz^{d+1}k,\syz^{d+2}k$ are in $\ext\ul(R)$, so is $F$, and so is $R\lessdot F$.
We obtain $\syz\cm(R)\subseteq\ext\ul(R)$ by \cite[Proposition 3.6]{umm} again.
\end{proof}

In Proposition \ref{3-2}(4) we got a description of $\s_n(R)$ as the resolving closure of $\tf_n(R)$ under a certain assumption on the Gorenstein locus of $R$.
We have the same description regardless of the Gorenstein locus in the theorem below.

\begin{thm}\label{tf-minimal}  
Let $(R,\m,k)$ be a local ring of dimension $d$.
\begin{enumerate}[\rm(1)]
\item
Suppose that $R$ is locally a Cohen--Macaulay ring with minimal multiplicity on $\spec_0R$. 
Let $0\le n\le d+1$ be such that $R$ satisfies $(\ss_n)$.
Then $\sing R=\ipd(\tf_n(R))$ if and only if $\s_n(R)=\res\tf_n(R)$.
\item
Assume that $R$ is Cohen--Macaulay ring and locally has minimal multiplicity on $\spec_0R$.
Then $\sing R=\nf(\tf_n(R))$ if and only if $\cm(R)=\res\tf_n(R)$ for $n=d,d+1$.
\end{enumerate}
\end{thm}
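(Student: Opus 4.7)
My plan is to handle each equivalence by splitting into a backward direction that follows directly from general facts recorded in Section 2, and a forward direction that reduces to Proposition \ref{3-2} while invoking the minimal multiplicity hypothesis in an essential way.

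For the backward directions, assuming $\s_n(R)=\res\tf_n(R)$ in part (1), I combine the general identity $\ipd(\res\X)=\ipd(\X)$ with the equality $\ipd(\s_n(R))=\sing R$, which holds under $(\ss_n)$ as noted in the preliminaries; together these immediately give $\ipd(\tf_n(R))=\sing R$. The backward direction of part (2) is analogous, using $\nf$ in place of $\ipd$ and the Cohen--Macaulay equality $\nf(\cm(R))=\sing R$.

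For the forward direction of part (1), the inclusion $\res\tf_n(R)\subseteq\s_n(R)$ follows from Auslander--Bridger's depth formula, which gives $\tf_n(R)\subseteq\widetilde\s_n(R)=\s_n(R)$ under $(\ss_n)$, together with $\s_n(R)$ being resolving; the analogue in part (2) uses $\s_n(R)=\cm(R)$ for $n\ge d$. The main task is the reverse inclusion. By Proposition \ref{3-2}(2) and (3), we have $\s_n(R)=\res\Syz_n(R)$ and $\cm(R)=\res\Syz_{d+1}(R)$, so it suffices to prove $\Syz_n(R)\subseteq\res\tf_n(R)$ in each case. For this I plan to adapt the technique appearing in the proof of Proposition \ref{3-2}(5): the minimal multiplicity of $R_\p$ for each $\p\in\spec_0R$ supplies Ulrich-type short exact sequences, while the locus hypothesis $\sing R=\ipd(\tf_n(R))$ (respectively $\sing R=\nf(\tf_n(R))$) provides, at every singular prime, a module in $\tf_n(R)$ with infinite projective dimension there. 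Combining these two ingredients, one should be able to realize any module in $\Syz_n(R)$ through resolving operations on members of $\tf_n(R)$.

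The main obstacle will be promoting the pointwise existence encoded in the locus hypothesis into a global categorical containment $\Syz_n(R)\subseteq\res\tf_n(R)$: the local data at each singular prime must be assembled into global short exact sequences using the minimal multiplicity structure on $\spec_0R$, and extra care will be needed at the closed point $\m$, where minimal multiplicity is not assumed. The constraint $n\le d+1$ in part (1) is expected to enter precisely here, to control the depth of the syzygies used in the approximation and keep the constructions inside $\Syz_n(R)$; for part (2), the two cases $n=d$ and $n=d+1$ should be treatable uniformly thanks to the equality $\s_n(R)=\cm(R)$ for $n\ge d$.
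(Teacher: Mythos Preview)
Your backward directions and the easy inclusion $\res\tf_n(R)\subseteq\s_n(R)$ are fine, and your reduction of the forward direction to $\Syz_n(R)\subseteq\res\tf_n(R)$ via Proposition~\ref{3-2}(2) is valid. The gap is in the remaining step, where your plan to ``adapt the technique of Proposition~\ref{3-2}(5)'' does not constitute a workable strategy. In Proposition~\ref{3-2}(5) the ring $R$ itself has minimal multiplicity, so the Ulrich-module machinery applies globally and produces actual short exact sequences in $\mod R$. Here only $R_\p$ for $\p\ne\m$ has minimal multiplicity, so those arguments give you information only in $\mod R_\p$; there is no mechanism for ``assembling local data into global short exact sequences'' as you suggest, and no candidate global exact sequences are exhibited. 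Your acknowledgment that the closed point needs ``extra care'' is the heart of the matter, not a peripheral issue.

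The paper's proof proceeds along an entirely different axis. Rather than trying to build $\Syz_n(R)$ out of $\tf_n(R)$ by explicit constructions, it shows that $\X:=\res\tf_n(R)$ is a \emph{dominant} resolving subcategory and then invokes the depth-wise classification theorem \cite[Theorem~1.1]{dom}: membership of $M\in\s_n(R)$ in $\X$ is tested by checking $\depth M_\p\ge\inf_{X\in\X}\depth X_\p$ at every prime. Dominance requires (i) $\syz^nk\in\X$, which follows from the bound $n\le t+1$ (itself forced by $(\ss_n)$ and $n\le d+1$) via \cite[Proposition~(2.26)]{AB}; and (ii) $\syz^{\height\p}\kappa(\p)\in\add_{R_\p}\X_\p$ for $\p\ne\m$, which is exactly where the punctured-spectrum minimal multiplicity hypothesis enters, through \cite[Proposition~5.2, Lemma~5.4]{crs} applied to a localized torsionfree module witnessing $\p\in\ipd(\tf_n(R))$. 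The depth inequalities are then verified by constructing, for each nonmaximal $\p$ with $\height\p>n$, an $n$-torsionfree module $Z$ with $\depth Z_\p=n$ using \cite[Lemma~4.6]{radius} and the Auslander--Bridger grade criterion. None of these ingredients---dominance, the depth-function criterion, the specific minimal-multiplicity results from \cite{crs}, or the construction via \cite{radius}---appears in your plan, and they are not replaceable by Ulrich-style extension arguments.
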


\begin{proof}
(2) The assertion is immediate from (1).

(1) The ``if'' part is obvious.
In what follows, we show the ``only if'' part.
We may assume $n\ge1$.
Set $\X=\res\tf_n(R)$.
We have $\tf_n(R)\subseteq\Syz_n(R)\subseteq\s_n(R)$ and $\s_n(R)$ is resolving.
Thus it is enough to show that $\X$ contains $\s_n(R)$. 

Put $t=\depth R$.
As $R$ satisfies $(\ss_n)$, we have $t\ge\inf\{n,d\}$.
As $n\le d+1$ by assumption, we get
\begin{equation}\label{7-1}
1\le n\le t+1.
\end{equation}
The module $\Ext_R^i(k,R)$ has grade at least $i-1$ for each $1\le i\le t+1$.
By \cite[Proposition (2.26)]{AB}, the module $\syz^ik$ is $i$-torsionfree for each $1\le i\le t+1$.
By \eqref{7-1} we get
\begin{equation}\label{7-2}
\syz^nk\text{ is $n$-torsionfree}.
\end{equation}
Fix a nonmaximal prime ideal $\p$ of $R$.
By assumption, the local ring $R_\p$ is Cohen--Macaulay and has minimal multiplicity.
If $R_\p$ is regular, then $\syz^{\height\p}\kappa(\p)$ is $R_\p$-free, and belongs to $\X_\p$.
Suppose that $R_\p$ is singular.
By assumption we get $\p\in \ipd(\tf_n(R))$, which implies $\p\in\ipd(G)$ for some $G\in\tf_n(R)$.
Then $\syz^{\height\p}G_\p$ is a nonfree maximal Cohen--Macaulay $R_\p$-module.
It is observed by using \cite[Proposition 5.2 and Lemma 5.4]{crs} and \cite[Lemma 3.2(1)]{crspd} that $\syz^{\height\p}\kappa(\p)\in\res_{R_\p}(\syz^{\height\p}G_\p)\subseteq\add_{R_\p}\X_\p$.
Thus we have
\begin{equation}\label{7-3}
\syz^{\height\p}\kappa(\p)\in\add_{R_\p}\X_\p\text{ for all nonmaximal prime ideals $\p$ of $R$}.
\end{equation}
It follows from \eqref{7-2} and \eqref{7-3} that the resolving subcategory $\X$ of $\mod R$ is dominant.

Fix an $R$-module $M\in\s_n(R)$.
The proof of the theorem will be completed once we prove that $M$ belongs to $\X$.
Fix a prime ideal $\p$ of $R$.
In view of \cite[Theorem 1.1]{dom}, it suffices to show that $\depth M_\p$ is not less than $r:=\inf_{X\in\X}\{\depth X_\p\}$.
As $M$ satisfies $(\ss_n)$, we have $\depth M_\p\ge\inf\{n,\height\p\}$.
If $\height\p\le n$, then $\depth M_\p\ge\height\p\ge r$, and we are done.
We may assume $\height\p>n$, and hence $\depth M_\p\ge n$ and $\depth R_\p\ge\inf\{n,\height\p\}=n$.
It is enough to deduce that $n\ge r$.

Consider the case where $\p=\m$.
In this case, we have $\depth R\ge n$.
Applying the depth lemma yields $\depth(\syz^nk)_\p=\depth\syz^nk=n$, while $\syz^nk\in\X$ by \eqref{7-2}.
Hence $n\ge r$.
Thus we may assume $\p\ne\m$.

The inequality $\height\p>n$ particularly says that $R_\p$ is not artinian, which implies that $\p\in\nf(R/\p)$.
By \cite[Lemma 4.6]{radius}, we find an $R$-module $C$ with $\nf(C)=\V(\p)$ and $\depth C_\q=\inf\{\depth R_\q,\depth(R/\p)_\q\}$ for all $\q\in\V(\p)$.
Set $Z=\syz^nC$.
As $\depth C_\p=0$ and $\depth R_\p\ge n$, the depth lemma says $\depth Z_\p=n$.
For each integer $1\le i\le n$ there are equalities and inequalities
\begin{align}
\label{7-7}\grade\Ext_R^i(C,R)&=\inf\{\depth R_\q\mid\q\in\supp\Ext_R^i(C,R)\}\\
\label{7-8}&\ge\inf\{\depth R,\depth R_\q\mid\m\ne\q\in\supp\Ext_R^i(C,R)\}\\
\label{7-9}&=\inf\{t,\height\q\mid\m\ne\q\in\supp\Ext_R^i(C,R)\}\\
\label{7-10}&\ge\inf\{t,\height\q\mid\m\ne\q\in\V(\p)\}
=\inf\{t,\height\p\}\ge n-1\ge i-1.
\end{align}
Here, \eqref{7-7} follows from \cite[Proposition 1.2.10(a)]{BH}, the inequality \eqref{7-8} is an equality unless $\Ext_R^i(C,R)=0$, and the equality \eqref{7-9} holds since $R$ is locally Cohen--Macaulay on the punctured spectrum.
In \eqref{7-10}, the first inequality holds since $\supp\Ext_R^i(C,R)\subseteq\nf(C)=\V(\p)$, the equality holds since $\p\ne\m$, and the second inequality follows from \eqref{7-1} and the fact that $\height\p>n$.
By \cite[Proposition (2.26)]{AB} the module $Z$ is $n$-torsionfree, and in particular, $Z\in\X$.
It is now seen that $n\ge r$.
\end{proof} 

To show our next proposition, we establish the lemma below, which is of independent interest.

\begin{lem}\label{a}
Let $(R,\m,k)$ be a $d$-dimensional Cohen--Macaulay non-Gorenstein local ring with minimal multiplicity.
Then $\G_{i,j}=\add R$ for all $i,j\ge0$ with $i+j\ge 2d+2$.
\end{lem}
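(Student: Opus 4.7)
The plan is to reduce to the Artinian case by modding out a system of parameters realizing the minimal multiplicity, and handle that case by direct computation.

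Since $\tr\tr M\approx M$ and $\Ext^{\ge 1}(-,R)$ is insensitive to projective summands, one has $M\in\G_{i,j}$ if and only if $\tr M\in\G_{j,i}$. After swapping $M$ and $\tr M$ if necessary, we may assume $j\ge i$, which forces $j\ge d+1$; consequently $M\in\tf_{d+1}(R)\subseteq\Syz_{d+1}(R)$ is maximal Cohen--Macaulay. By minimal multiplicity (after a harmless faithfully flat extension to ensure an infinite residue field, if needed), choose a system of parameters $\xx=x_1,\dots,x_d$ satisfying $\m^2=(\xx)\m$; such $\xx$ is regular on both $R$ and $M$. Set $\bar R=R/(\xx)$ and $\bar M=M/\xx M$. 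Then $\bar R$ is Artinian, non-Gorenstein, with $\bar\m^2=0$.

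Iterating the long exact $\Ext$-sequences coming from $0\to R/(x_1,\dots,x_{k-1})\xrightarrow{x_k}R/(x_1,\dots,x_{k-1})\to R/(x_1,\dots,x_k)\to 0$ transfers $\Ext^a_R(M,R)=0$ for $1\le a\le i$ into $\Ext^a_R(M,\bar R)=0$ for $1\le a\le i-d$, and the change-of-rings isomorphism $\Ext^a_R(M,\bar R)\cong\Ext^a_{\bar R}(\bar M,\bar R)$---valid because $\xx$ is $M$-regular---pushes this vanishing down to $\bar R$. The same procedure on the $\tr M$ side, combined with the fact that $\overline{\tr_R M}$ and $\tr_{\bar R}\bar M$ agree up to $\bar R$-projective summands (obtained by reducing a projective presentation of $M$ modulo $\xx$), yields $\Ext^a_{\bar R}(\tr_{\bar R}\bar M,\bar R)=0$ for $1\le a\le j-d$. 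Thus $\bar M$ lies in the subcategory defined over $\bar R$ with parameter pair $(\max(0,i-d),\max(0,j-d))$, whose indices sum to at least $2$.

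The crux is then to establish the Artinian case: over $\bar R$ Artinian non-Gorenstein with $\bar\m^2=0$, the subcategory $\G_{a,b}$ (defined using $\Ext_{\bar R}$ and $\tr_{\bar R}$) equals $\add\bar R$ as soon as $a+b\ge 2$. The essential point is that $\bar\m^2=0$ annihilates every syzygy over $\bar R$, so each syzygy is a $k$-vector space; consequently both $\Ext^1_{\bar R}(N,\bar R)$ and $\Ext^1_{\bar R}(\tr N,\bar R)$ admit explicit Euler-characteristic formulas in terms of Betti numbers of $N$ and $e=\edim\bar R$. The non-Gorenstein hypothesis $e\ge 2$ then forces at least one of these to be nonzero for every non-free $N$, handling the three boundary cases $(a,b)=(2,0),(1,1),(0,2)$. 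Once $\bar M$ is $\bar R$-free, $\pd_R M=\pd_{\bar R}\bar M=0$ gives $M\in\add R$.

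The main obstacle is the Artinian computation itself, especially the reflexive case $(a,b)=(0,2)$, where one must show directly that reflexive modules over the Artinian non-Gorenstein ring $\bar R$ with $\bar\m^2=0$ are free. A secondary technical issue is the careful bookkeeping of projective summands in the identification $\overline{\tr_R M}\approx\tr_{\bar R}\bar M$, given that $\tr M$ is not a priori $\xx$-regular, so the obvious change-of-rings spectral sequence for $\tr M$ does not immediately collapse.
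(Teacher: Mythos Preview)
Your overall plan—reduce modulo a minimal reduction $\xx$ to the Artinian ring $\bar R$ with $\bar\m^2=0$ and argue there—matches the paper's, and the Artinian endgame is fine. The gap is the step you label a ``secondary technical issue''; it is actually the central one. From $\Ext^a_R(\tr_RM,R)=0$ one gets $\Ext^a_R(\tr_RM,\bar R)=0$ for $a\le j-d$, and $\overline{\tr_RM}\cong\tr_{\bar R}\bar M$ does hold, but passing to $\Ext^a_{\bar R}(\overline{\tr_RM},\bar R)$ requires $\xx$ to be regular on $\tr_RM$. After your swap to $j\ge i$ the module $M$ is MCM, yet $\tr_RM\in\tf_i(R)$; when $i<d$ (for instance $(i,j)=(0,2d+2)$) there is no reason for $\tr_RM$ to be MCM, and the spectral sequence $\Ext^p_{\bar R}(\Tor^R_q(\tr_RM,\bar R),\bar R)\Rightarrow\Ext^{p+q}_R(\tr_RM,\bar R)$ forces only the edge term $E_2^{1,0}=\Ext^1_{\bar R}(\overline{\tr_RM},\bar R)$ to vanish, not the higher ones. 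You then land merely in $\G_{0,1}(\bar R)=\Syz_1(\bar R)$, which is far from $\add\bar R$.

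The paper sidesteps this by first shifting via the stable equivalence of $\G_{i,j}$ with $\G_{2d+2,0}$ (using $\tr$ and $\syz$) and then setting $N=\syz^dM\in\G_{d+2,d}$. This single move arranges simultaneously that $N$ is MCM (from the second index $d$) and that the first index is $d+2$, so reducing only the vanishing $\Ext^a_R(N,R)=0$ for $1\le a\le d+2$—which involves $N$, never $\tr N$—already yields $\Ext^1_{\bar R}(\bar N,\bar R)=\Ext^2_{\bar R}(\bar N,\bar R)=0$, and only the easy Artinian case $(2,0)$ is needed. Insert such a shift before reducing modulo $\xx$ and your argument goes through; as written, the transpose side does not descend.
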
   

\begin{proof}
We freely use \cite[Proposition 1.1.1]{I}.
We may assume $i+j=2d+2$ since $\G_{a,b}$ contains $\G_{a+1,b}$ and $\G_{a,b+1}$ for all $a,b\ge0$.
As the stable category of $\G_{i,j}$ is equivalent to that of $\G_{2d+2,0}$, it suffices to show that every $M\in\G_{2d+2,0}$ is free.
Taking the faithfully flat map $R\to R[X]_{\m R[X]}$, we may assume that $k$ is infinite.
Choose an $R$-sequence $\xx=x_1,...,x_d$ with $\m^2=\xx\m$ by \cite[Exercise 4.6.14]{BH}.
We have $N:=\syz^dM\in\G_{d+2,d}\subseteq\cm(R)$.
Using the exact sequences $\{0\to N/\xx_{i-1}N\xrightarrow{x_i}N/\xx_{i-1}N\to N/\xx_iN\to0\}_{i=1}^d$ where $\xx_i=x_1,\dots,x_i$, we see that $\Ext_R^j(\overline N,R)=0$ for $j=d+1,d+2$ where $\overline{(-)}=(-)\otimes_RR/(\xx)$.
Hence $\Ext_{\overline R}^j(\overline N,\overline R)=0$ for $j=1,2$ by \cite[Lemma 3.1.16]{BH}.
In particular, $\Ext_{\overline R}^1(L,\overline R)=0$ where $L=\syz_{\overline R}\overline N$.
As $(\m\overline R)^2=0$, the module $L$ is a $k$-vecor space.
As $R$ is non-Gorenstein, we must have $L=0$, which means $\overline N$ is $\overline R$-free, which means $N$ is $R$-free (by \cite[Lemma 1.3.5]{BH}), which means $M$ is $R$-free (as $M\approx\syz^{-d}N$).
\end{proof}   

Using the above lemma, we can prove the following proposition.

\begin{prop}\label{b}
Suppose that for all minimal prime ideals $\p$ of $R$ the artinian local ring $R_{\p}$ has minimal multiplicity.
If $\ng (R) \subseteq \nf(\refl R)$ (e.g., if $\sing R\subseteq\nf(\refl R)$), then $R$ is generically Gorenstein.
\end{prop}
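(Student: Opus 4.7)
The plan is to argue by contradiction: suppose $R$ fails to be generically Gorenstein, pick a bad minimal prime, localize, and invoke Lemma~\ref{a} to force a non-free reflexive module to become free, contradicting the hypothesis.

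More precisely, suppose there is a minimal prime $\p$ of $R$ with $R_\p$ not Gorenstein, so that $\p\in\ng(R)$. By hypothesis, $\p\in\nf(\refl R)$, hence there exists a reflexive $R$-module $M$ with $M_\p$ not free over $R_\p$. Since $\p$ is minimal, $R_\p$ is an artinian (hence Cohen--Macaulay) local ring; by assumption it has minimal multiplicity, and by our choice of $\p$ it is non-Gorenstein. Thus Lemma~\ref{a} applies with $d=0$: for all $i,j\ge 0$ with $i+j\ge 2$ one has $\G_{i,j}(R_\p)=\add R_\p$. Taking $(i,j)=(0,2)$ gives $\refl R_\p=\tf_2(R_\p)=\G_{0,2}(R_\p)=\add R_\p$, so every reflexive $R_\p$-module is free.

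The remaining step is to check that localizing the reflexive module $M$ yields a reflexive $R_\p$-module. This is standard: the transpose commutes with localization up to projective summands (since a minimal free presentation of $M$ localizes to a free presentation of $M_\p$, and $R_\p$-dualizing commutes with localization for finitely presented modules), so $(\tr_R M)_\p\approx\tr_{R_\p}(M_\p)$, and $\Ext$ commutes with localization; thus the vanishing $\Ext^{1,2}_R(\tr M,R)=0$ descends to $\Ext^{1,2}_{R_\p}(\tr_{R_\p}M_\p,R_\p)=0$, i.e., $M_\p\in\tf_2(R_\p)=\refl R_\p$. By the previous paragraph, $M_\p$ must then be free, contradicting the choice of $M$. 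This contradiction shows that every minimal prime $\p$ satisfies $R_\p$ Gorenstein, i.e., $R$ is generically Gorenstein.

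For the parenthetical ``e.g.'' remark, it suffices to note that a non-Gorenstein local ring is in particular non-regular (every regular local ring is Gorenstein), so $\ng(R)\subseteq\sing R$, and then the stronger hypothesis $\sing R\subseteq\nf(\refl R)$ trivially implies $\ng(R)\subseteq\nf(\refl R)$. The only non-routine ingredient is Lemma~\ref{a}, which is already available; the main obstacle in writing out the proof is the bookkeeping around ``transpose commutes with localization,'' but this is a well-known and mild point.
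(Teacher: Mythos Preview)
Your proof is correct and follows essentially the same approach as the paper's: both argue by contradiction, pick a minimal prime $\p$ in $\ng(R)$, apply Lemma~\ref{a} with $d=0$ to conclude $\refl(R_\p)=\add R_\p$, and use that reflexivity localizes to derive the contradiction. The paper compresses the localization step into the one-line inclusion $(\refl R)_\p\subseteq\refl(R_\p)$, whereas you spell out the transpose/Ext argument; both are fine.
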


\begin{proof}
Take any $\p\in\Min R$.
By assumption, we have $(\p R_\p)^2=0$.
If $\p \in \ng(R)$, then Lemma \ref{a} implies $(\refl R)_\p\subseteq\refl(R_\p)=\add(R_\p)$.
But then $\p \notin \nf(\refl R)$, contradicting our assumption.
Thus if $\p \in \Min(R)$, then $\p \notin \ng(R)$, that is, $R_\p$ is Gorenstein.
\end{proof}

The corollary below, which is a consequence of the above proposition, gives kind of a converse to Proposition \ref{3-2}(4), and shows that in some cases the minimal multiplicity condition in Theorem \ref{tf-minimal}(1) actually forces some stringent condition on the Gorenstein locus of $R$, so that in those cases Theorem \ref{tf-minimal}(1) gives nothing newer than Proposition \ref{3-2}(4).

\begin{cor}\label{3-10}
Assume that $R$ satisfies $(\ss_2)$.
Consider the following four statements.\par
{\rm(1)} $R$ is generically a hypersurface.\quad
{\rm(2)} $R$ is generically Gorenstein.\par
{\rm(3)} $\s_2(R)=\res (\refl R)$.\quad
{\rm(4)} $\sing R=\ipd (\refl R)$.\\
Then $(1)\Rightarrow(2)\Rightarrow(3)\Rightarrow(4)$ hold.
If $R$ is generically of minimal multiplicity, then the four statements are equivalent.
If $R$ is Cohen--Macaulay and $\dim R\le2$, then {\rm(4)} is equivalent to $\sing R=\nf(\refl R)$.
\end{cor}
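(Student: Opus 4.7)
The plan is to establish $(1)\Rightarrow(2)\Rightarrow(3)\Rightarrow(4)$ first, then close the cycle back to $(1)$ under the generic minimal multiplicity hypothesis, and finally treat the Cohen--Macaulay case of dimension at most two as a separate refinement. The implication $(1)\Rightarrow(2)$ is immediate from the fact that hypersurfaces are Gorenstein. For $(2)\Rightarrow(3)$, observe that generic Gorensteinness is precisely the condition $(\g_0)$, so together with the standing assumption $(\ss_2)$ Proposition \ref{3-2}(4) with $n=2$ directly yields $\s_2(R)=\res\tf_2(R)=\res(\refl R)$. For $(3)\Rightarrow(4)$, I would invoke the two locus identities recorded in Section 2: $\ipd(\res\X)=\ipd(\X)$ for any subcategory $\X$, and $\ipd(\s_2(R))=\sing R$ whenever $R$ satisfies $(\ss_2)$; combining these with (3) gives $\sing R=\ipd(\s_2(R))=\ipd(\res\refl R)=\ipd(\refl R)$.

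Under the generic minimal multiplicity hypothesis, I would close the cycle via $(4)\Rightarrow(2)\Rightarrow(1)$. Since $\ipd(\refl R)\subseteq\nf(\refl R)$ always, statement (4) forces $\sing R\subseteq\nf(\refl R)$, and the generic minimal multiplicity condition is precisely the hypothesis of Proposition \ref{b}, which then concludes that $R$ is generically Gorenstein. For $(2)\Rightarrow(1)$, fix a minimal prime $\p$: the ring $R_\p$ is artinian, Gorenstein, and of minimal multiplicity, whence $(\p R_\p)^2=0$, so $\p R_\p\subseteq\soc R_\p$, and the Gorenstein property forces $\dim_{\kappa(\p)}\soc R_\p\le 1$. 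Hence $R_\p$ is either $\kappa(\p)$ or $\kappa(\p)[x]/(x^2)$, both of which are hypersurfaces.

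For the final assertion, with $R$ Cohen--Macaulay and $\dim R\le 2$, I would reduce the claim to proving $\nf(\refl R)=\ipd(\refl R)$, after which (4) and the equality $\sing R=\nf(\refl R)$ become interchangeable. The inclusion $\ipd(\refl R)\subseteq\nf(\refl R)$ is general. For the reverse, the chain $\refl R=\tf_2(R)\subseteq\Syz_2(R)\subseteq\widetilde\s_2(R)$ combined with $\dim R\le 2$ forces every reflexive module to be maximal Cohen--Macaulay, so for any $M\in\refl R$ and $\p\in\nf(M)$ the localization $M_\p$ is a non-free MCM module over $R_\p$, and the Auslander--Buchsbaum formula rules out $\pd_{R_\p}M_\p<\infty$, giving $\p\in\ipd(M)$. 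The main technical ingredient is the implication $(4)\Rightarrow(2)$ under generic minimal multiplicity, which rests entirely on Proposition \ref{b} (itself built on Lemma \ref{a}); all remaining steps are either direct applications of Proposition \ref{3-2}(4) and the locus identities of Section 2, or short classical arguments about artinian Gorenstein rings and the Auslander--Buchsbaum formula.
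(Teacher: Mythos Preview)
Your proposal is correct and follows essentially the same route as the paper: $(1)\Rightarrow(2)$ is trivial, $(2)\Rightarrow(3)$ is Proposition \ref{3-2}(4), $(3)\Rightarrow(4)$ uses the locus identities from Section 2, the closure $(4)\Rightarrow(2)\Rightarrow(1)$ under generic minimal multiplicity comes from Proposition \ref{b} together with the elementary fact that an artinian Gorenstein local ring of minimal multiplicity is a hypersurface, and the final Cohen--Macaulay assertion reduces to $\refl R\subseteq\cm(R)$ plus Auslander--Buchsbaum. The paper treats the last assertion and $(2)\Rightarrow(1)$ more tersely (``clear'' and ``well-known''), but your expansions of these steps are accurate and match what is implicit there.
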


\begin{proof}
The last assertion is clear.
It is obvious that (1) implies (2).
Proposition \ref{3-2}(4) shows that (2) implies (3).
Since $\ipd(\s_2(R))=\sing R$, so (3) implies (4).

Suppose that $R$ is generically of minimal multiplicity, and assume (4).
Then $\sing R=\ipd(\refl R)\subseteq\nf(\refl R)$, and hence (2) holds by Proposition \ref{b}.
Finally, (2) implies (1) by the well-known (and easy to see) fact that an artinian Gorenstein local ring with minimal multiplicity is a hypersurface.
\end{proof} 

The first example below shows the non-vacuousness of Corollary \ref{3-10}.
The second says that one cannot drop the hypothesis of $R$ being generically of minimal multiplicity in the second part of Corollary \ref{3-10}.

\begin{ex}
Let $k$ be a field.
\begin{enumerate}[(1)]
\item
Let $R=k[\![x,y,z]\!]/(x^2,xy,y^2z)$.
Then $R$ is a $1$-dimensional Cohen--Macaulay local ring with $y-z$ a parameter.
Put $\p=(x,y)$, $\q=(x,z)$ and $\m=(x,y,z)$.
It holds that $R_\p\cong k[\![x,y,z]\!]_{(x,y)}/(x^2,xy,y^2)$ with $z$ a unit, while $R_\q\cong k[\![y]\!]_{(0)}$ with $y$ a unit.
We have $\ell\ell(R_\p)=2$, $\ell\ell(R_\q)=1$, $\spec R=\{\p,\q,\m\}$, $\Min R=\{\p,\q\}$ and $\sing R=\ng R=\{\p,\m\}$. Hence $R$ is generically of minimal multiplicity, $R$ is not generically Gorenstein and $\p$ does not belong to $\nf(\refl R)$. 
\item
Let $R=k[\![x,y,z,w]\!]/(x^2,y^2,yz,z^2w)$.
Then $R$ is a $1$-dimensional Cohen--Macaulay local ring with $w-z$ a parameter.
Set $\p=(x,y,z)$ and $\q=(x,y,w)$.
We have $R_\p\cong k[\![x,y,z,w]\!]_{(x,y,z)}/(x^2,y^2,yz,z^2)$ with $w$ a unit, while $R_\q\cong k[\![x,z]\!]_{(x)}/(x^2)$ with $z$ a unit.
It follows that $\spec R=\sing R=\{\p,\q,\m\}$, $\Min R=\{\p,\q\}$ and $\{\p,\m\}=\ng R$.
So $R$ is not generically Gorenstein.
As $(x,x)$ is an exact pair of zerodivisors (i.e., $0:x=(x)$), $R/(x)$ is a totally reflexive $R$-module, and in particular, it is reflexive.
We see that $\ng R\subseteq\sing R\subseteq\nf(R/(x))\subseteq\nf(\gp (R))\subseteq\nf(\refl R)\subseteq \nf (\cm(R))=\sing (R)$.
Note that $\ell\ell(R_\p)=3$, so that $R_\p$ does not have minimal multiplicity.
\end{enumerate}
\end{ex} 

In view of Corollary \ref{3-10}, we raise natural questions.

\begin{ques}
\begin{enumerate}[(1)]
\item
For a local Cohen--Macaulay ring $R$ (generically of minimal multiplicity) of dimension $d>1$, does the equality $\res(\tf_{d+1}(R))=\cm(R)$ force any condition on the Gorenstein locus of $R$\,?
\item
Let $R$ be an artinian local ring.
When does the equality $\ext (\refl R)=\mod R$ imply the Gorensteinness of $R$\,?
(Lemma \ref{a} gives one sufficient condition that $R$ has minimal multiplicity.)
\end{enumerate}
\end{ques}

\section{Syzygies of the residue field and direct summands of syzygies}

The proof of Theorem \ref{tf-minimal}(1) crucially uses the torsionfree nature of syzygies of the residue field of a local ring.
So, in this section, we record some results explaining when certain syzygies of the residue field of a local ring is or is not $n$-torsionfree for certain $n$ depending on the depth of the ring.
We begin with the following result.

\begin{thm}\label{syz}
Let $(R,\m,k)$ be a local ring of depth $t$.
Then the following statements hold.
\begin{enumerate}[\rm(1)]  
\item
Let $n\ge0$ be an integer, and put $m=\min \{n,t+1\}$.
Then the inclusion $\Syz_n(R)\cap\mod_0R\subseteq\tf_m(R)$ holds true.
In particular, the module $\syz^nk$ is $m$-torsionfree.
\item
The module $\syz^t k$ is $(t+1)$-torsionfree.   
\item
If $\syz^n k$ is $(t+2)$-torsionfree for some $n\ge t+1$, then $R$ is Gorenstein.
The converse is also true.
\end{enumerate}
\end{thm}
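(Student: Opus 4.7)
The plan is to verify the three parts in sequence, each relying on Auslander--Bridger cohomological machinery and careful analysis of the minimal free resolution of the residue field.

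For Part~(1), I will apply Auslander--Bridger's Proposition~(2.26) in \cite{AB} (as used in the proof of Theorem~\ref{tf-minimal}(1)): $\syz^m N$ is $m$-torsionfree provided $\grade_R \Ext^i_R(N, R) \geq i - 1$ for $1 \leq i \leq m$. Given $M \in \Syz_n(R) \cap \mod_0 R$, write $M \approx \syz^n N = \syz^m (\syz^{n-m} N)$. Localizing the resolution $0 \to M \to P_{n-1} \to \cdots \to P_0 \to N \to 0$ at any $\p \neq \m$ (where $M_\p$ is free) shows $\pd_{R_\p} N_\p \leq n$, and the Auslander--Buchsbaum formula then forces $\depth R_\p \geq i$ whenever $\Ext^i(N, R)_\p \neq 0$; combined with $\depth R_\m = t$, this yields $\grade_R \Ext^i(N, R) \geq \min(i, t) \geq i - 1$ in the range $i \leq t + 1$, and a standard shift argument transfers the criterion to the module $\syz^{n-m}N$. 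For the special case $M = \syz^n k$ the argument is especially clean: $\Ext^i(k, R)$ has support in $\{\m\}$, hence grade $\geq t$, which validates the criterion whenever $i \leq t + 1$.

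For Part~(2), the goal is to upgrade the $t$-torsionfreeness of $\syz^t k$ from Part~(1) to $(t+1)$-torsionfreeness. The strategy combines the Auslander--Bridger exact sequence $0 \to \Ext^1(\tr M, R) \to M \to M^{**} \to \Ext^2(\tr M, R) \to 0$ with the stable identification $\Ext^j(\tr M, R) \cong \Ext^{j-2}(M^*, R)$ for $j \geq 3$, reducing the desired conclusion for $M = \syz^t k$ to: (i)~reflexivity of $\syz^t k$, and (ii)~$\Ext^i((\syz^t k)^*, R) = 0$ for $1 \leq i \leq t - 1$. Item~(i) follows from a depth-lemma argument on $0 \to \syz^t k \to (\syz^t k)^{**} \to \Ext^2(\tr \syz^t k, R) \to 0$, since the cokernel is supported at $\{\m\}$ and $\depth \syz^t k = t$ (with small $t$ handled separately). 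Item~(ii) is the key technical step: identify $(\syz^t k)^* = \ker(F_t^* \to F_{t+1}^*)$ from the dualized minimal free resolution of $k$, fit it into $0 \to B_t \to (\syz^t k)^* \to \Ext^t(k, R) \to 0$ with $B_t = \image(F_{t-1}^* \to F_t^*)$, and exploit the exactness of the dualized complex at positions $0, 1, \dots, t-1$ (which holds because $\Ext^i(k, R) = 0$ for $i < t$) to compute $\Ext^i(B_t, R)$; the required vanishings then follow from the induced Ext long exact sequence. The main technical obstacle is controlling the connecting homomorphisms in these long exact sequences.

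For Part~(3), the direction $R$ Gorenstein $\Rightarrow \syz^n k \in \tf_{t+2}(R)$ for $n \geq t + 1$ is immediate, since over a Gorenstein ring any such $\syz^n k$ is totally reflexive and thus lies in $\gp(R) \subseteq \tf_{t+2}(R)$. For the converse, assuming $\syz^n k \in \tf_{t+2}(R)$ for some $n \geq t + 1$, the same reduction as in Part~(2) yields $\Ext^i((\syz^n k)^*, R) = 0$ for $1 \leq i \leq t$, one degree beyond what Parts~(1) and~(2) provide. Propagating this extra vanishing through the short exact sequence $0 \to B_n \to (\syz^n k)^* \to \Ext^n(k, R) \to 0$ (with $B_n = \image(F_{n-1}^* \to F_n^*)$) and its iterated counterparts arising from the dualized resolution forces sufficient vanishing of $\Ext^j(k, R)$ for $j > t$, which by local duality (the Bass-number Gorenstein criterion) is equivalent to $R$ being Gorenstein.
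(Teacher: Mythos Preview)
Your approach to Part~(1) is essentially correct and parallels the paper's, though the paper invokes Ma\c{s}ek's criterion \cite[Theorem~43]{M} directly: an $m$-syzygy $M$ with $M_\p$ free whenever $\depth R_\p\le m-2$ is $m$-torsionfree. This is cleaner than rederiving the grade estimates via \cite[Proposition~(2.26)]{AB}, but the content is the same.

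For Parts~(2) and~(3) your route diverges substantially from the paper's, and carries real unresolved obstacles. In Part~(2), your reduction is correct---$(t{+}1)$-torsionfreeness of $M=\syz^tk$ amounts to reflexivity plus $\Ext^i(M^*,R)=0$ for $1\le i\le t-1$---but the vanishing at $i=t-1$ leads, via your exact sequence $0\to B_t\to(\syz^tk)^*\to\Ext^t(k,R)\to0$, to needing the connecting map $\Ext^{t-1}(B_t,R)\cong k\to\Ext^t(k^{\oplus r},R)$ to be injective. You flag this as ``the main technical obstacle,'' and indeed it is not obvious; moreover the reflexivity step for $t=1$ is not covered by your depth-lemma sketch. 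In Part~(3) your ``propagation'' argument is too vague: for $n>t$ the dualized complex is no longer exact below degree~$n$, so $B_n$ has no simple free resolution, and it is unclear how you extract $\Ext^j(k,R)=0$ for some $j>t$.

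The paper bypasses all of this. For~(2), pick a maximal $R$-sequence $\xx$; since $\depth R/(\xx)=0$ there is an exact sequence $0\to k\to R/(\xx)\to C\to0$, and applying $\syz^t$ (using $\pd_RR/(\xx)=t$) yields $\syz^tk\approx\syz^{t+1}C$, so Part~(1) applies directly. For~(3), write $\syz^nk\approx\syz^{t+2}N$ with $N=\syz^{n-t-2}k$ (or $N=C$ when $n=t+1$); then $\syz^{t+2}N\in\tf_{t+2}(R)$ forces, via \cite[Proposition~(2.26) and Corollary~(4.18)]{AB}, $\grade\Ext^{t+2}_R(N,R)\ge t+1>\depth R$, hence $\Ext^{t+2}_R(N,R)=\Ext^n_R(k,R)=0$, and Roberts' theorem \cite{roberts} gives Gorensteinness. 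This grade-exceeds-depth trick is the key idea your proposal is missing.
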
 

\begin{proof} 
(1) Let $M$ be an $R$-module in $\Syz_n(R)\cap\mod_0R$.
Since $m\le n$, we have $M\in\Syz_n(R)\subseteq\Syz_m(R)$.
Let $\p$ be a prime ideal of $R$ such that $\depth R_\p\le m-2$.
Then $\depth R_\p\le m-2\le t-1$, which forces $\p\ne\m$.
Hence the $R_\p$-module $M_\p$ is free.
Applying \cite[Theorem 43]{M}, we obtain $M\in\tf_m(R)$.

(2) There is an $R$-sequence $\xx=x_1,\dots,x_t$.
Since $R/(\xx)$ has depth $0$, there is an exact sequence $0 \to k \to R/(\xx) \to C \to 0$.
Applying $\syz^t=\syz_R^t$ and remembering $\pd_RR/(\xx)=t$, we get an exact sequence $0 \to \syz^tk \to F \to \syz^tC \to 0$ of $R$-modules with $F$ free.
This shows $\syz^tk\approx\syz^{t+1}C$.
It follows from (1) that $\syz^{t+1}C\in\Syz_{t+1}(R)\cap\mod_0R\subseteq\tf_{t+1}(R)$, and therefore $\syz^tk\in\tf_{t+1}(R)$.

(3) First we prove the $n=t+1$ case.
With the notation of the proof of (1), we have $\syz^tk\approx\syz^{t+1}C$.
This implies $\syz^{t+1}k \approx \syz^{t+2}C$ and $\syz^{t+2}C\in\tf_{t+2}(R)$.
It follows from \cite[Corollary (4.18) and Proposition (2.26)]{AB} that $\Ext^{t+2}_R(C,R)$ has grade at least $t+1$.
Note that any nonzero $R$-module has grade at most $t=\depth R$.
We thus have $0=\Ext^{t+2}_R(C,R)=\Ext^1_R(\syz^{t+1}C,R)=\Ext^1_R(\syz^t k, R)=\Ext^{t+1}_R(k,R)$.
By \cite[II. Theorem 2]{roberts} we get $\id_RR<t+1$.
We conclude that $R$ is Gorenstein.

Next assume $n\ge t+2$.
We have $\syz^n k=\syz^{t+2} (\syz^{n-t-2}k)\in\tf_{t+2}(R)$.
Again by \cite[Corollary (4.18) and Proposition (2.26)]{AB}, $\Ext^{t+2}_R(\syz^{n-t-2}k,R)$ has grade at least $t+1$.
We have $0=\Ext^{t+2}_R(\syz^{n-t-2}k,R)=\Ext^n_R(k,R)$, and again by \cite[II. Theorem 2]{roberts} we get $\id_RR<n$ and $R$ is Gorenstein.
\end{proof}

\begin{rem}
Theorem \ref{syz}(2) vastly generalizes \cite[Proposition 4.1]{faber} in that we neither use any Cohen--Macaulay assumption on the ring, nor do we have any restriction on the depth of the ring. 
\end{rem}

As an immediate consequence of Theorem \ref{syz}, we can describe, for certain values of $n$, the resolving closure of $n$-torsionfree modules which are also locally free on the punctured spectrum.

\begin{cor}\label{loc-free}
Let $(R,\m,k)$ be a local ring of depth $t$.
Then there are equalities
\begin{align*}
\res(\tf_n(R)\cap \mod_0 R)&=\{M\in \mod_0 R\mid\depth M\ge n\}\text{ for $0\le n\le t$, and}\\
\res(\tf_{t+1}(R)\cap \mod_0 R)&=\{M\in \mod_0 R\mid\depth M\ge t\}.
\end{align*}
\end{cor}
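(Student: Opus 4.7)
The plan is to prove each equality by establishing the two inclusions.

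For the forward inclusion, I would first verify that $\Y_m:=\{M\in\mod_0 R\mid\depth M\ge m\}$ is a resolving subcategory of $\mod R$ for each $0\le m\le t$: it contains $R$, the depth lemma yields closure under direct summands, extensions, and syzygies (the syzygy step is where $m\le t$ enters), and membership in $\mod_0 R$ is preserved by all these operations. The Auslander--Bridger inequality \cite[Proposition (2.26)]{AB} gives $\depth M\ge\min\{n,t\}$ for every $M\in\tf_n(R)$, so
\[
\tf_n(R)\cap\mod_0 R\subseteq\Y_{\min\{n,t\}}.
\]
Passing to resolving closures then produces the forward inclusion in both equalities.

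For the reverse inclusion, Theorem \ref{syz}(1) places $\syz^n k$ in $\tf_n(R)\cap\mod_0 R$ for $0\le n\le t$, and Theorem \ref{syz}(2) places $\syz^t k$ in $\tf_{t+1}(R)\cap\mod_0 R$. Both reverse inclusions therefore reduce to the classification-type statement
\[
\Y_m\subseteq\res(\syz^m k)\qquad (0\le m\le t).
\]
I would deduce this by invoking the dominance criterion \cite[Theorem 1.1]{dom} in the same spirit as in the proof of Theorem \ref{tf-minimal}(1): with $\X:=\res(\syz^m k)\subseteq\mod_0 R$, the depth of $\syz^m k$ at $\m$ equals $m$, and for any $M\in\mod_0 R$ the depth comparison at non-maximal primes is automatic (both $M_\p$ and $X_\p$ for $X\in\X$ are free), so placing $M\in\Y_m$ inside $\X$ reduces to the single inequality $\depth M\ge m$.

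The main obstacle is adapting the dominance argument of \cite[Theorem 1.1]{dom} to this restricted setting. In its standard form the criterion requires $\syz^{\height\p}\kappa(\p)\in\add_{R_\p}\X_\p$ at every non-maximal $\p$, a condition that would force regularity of $R$ on the punctured spectrum when $\X\subseteq\mod_0 R$. The essential resolution is to run the whole argument inside the resolving subcategory $\mod_0 R$ of $\mod R$, where non-maximal primes make the depth comparison trivially satisfied; this reduces the dominance verification to a single check at $\m$, namely $\syz^m k\in\X$, supplied by Theorem \ref{syz}.
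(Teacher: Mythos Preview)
Your strategy coincides with the paper's: establish the forward inclusion by checking that $\Y_m$ is resolving and contains $\tf_n(R)\cap\mod_0 R$, then reduce the reverse inclusion via Theorem~\ref{syz} to the single claim $\Y_m\subseteq\res(\syz^m k)$ for $0\le m\le t$. The divergence is in how this last claim is justified.

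The paper simply cites \cite[Proposition 3.4]{dom}, which is exactly the assertion that every $M\in\mod_0 R$ with $\depth M\ge m$ lies in $\res(\syz^m k)$; nothing further is needed. You instead reach for the general dominance criterion \cite[Theorem 1.1]{dom}, and you correctly diagnose why it does not apply as stated: dominance of $\X=\res(\syz^m k)$ demands $\syz^{\height\p}\kappa(\p)\in\add_{R_\p}\X_\p$ at every prime, and since $\X\subseteq\mod_0 R$ this forces $R_\p$ to be regular for all $\p\ne\m$. Your proposed fix---``run the whole argument inside $\mod_0 R$''---is the right intuition, but as written it is a heuristic rather than a proof; carrying it out rigorously amounts to reproving \cite[Proposition 3.4]{dom}. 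Either cite that proposition directly or supply the adapted argument in full.

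A minor point on the forward inclusion: labeling \cite[Proposition (2.26)]{AB} the ``Auslander--Bridger inequality'' and using it for the depth bound is roundabout (that proposition is the grade criterion for $n$-torsionfreeness). The paper gets $\depth M\ge\min\{n,t\}$ more directly from $\tf_n(R)\subseteq\Syz_n(R)$ together with the depth lemma.
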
 

\begin{proof}
Fix $0\le n\le t$ and put $\X_n=\{M\in \mod_0 R\mid\depth M\ge n\}$.
That $\X_n$ is resolving is seen by the depth lemma etc.
Also, $\tf_n(R)\subseteq\Syz_n(R)\subseteq\X_n$ by the depth lemma again.
Therefore, $\res(\tf_n(R)\cap \mod_0 R)\subseteq\X_n$.
The reverse inclusion follows from \cite[Proposition 3.4]{dom} and Theorem \ref{syz}(1): we have $M\in \res\syz^n k\subseteq \res(\tf_n(R)\cap \mod_0 R)$ whenever $M\in \X_n$.
We obtain $\X_n=\res(\tf_n(R)\cap \mod_0 R)$ for every $0\le n\le t$.
For the other equality, we have $\res(\tf_{t+1}(R)\cap\mod_0R)\subseteq\res(\tf_t(R)\cap\mod_0R)=\X_t$, and the opposite inclusion follows similarly as above by using \cite[Proposition 3.4]{dom} and Theorem \ref{syz}(2).
\end{proof}

In view of Theorem  \ref{syz}(2) it would also be natural to ask if for any special classes of non-Gorenstein local Cohen--Macaulay rings $(R,\m, k)$ of dimension $d$, one can prove $\syz^d k\in \tf_n(R)$ or even $\Omega^d k\in \Syz_n(R)$ for some $n>d+1$.
We shall show that the answer is no.
For this, we record the following lemma, whose second assertion would be of independent interest.
Recall that for a local ring $(R,\m,k)$ of depth $t$ the number $\r(R)=\dim_k\Ext_R^t(k,R)$ is called the {\em type} of $R$.

\begin{lem}\label{4-2}
Let $(R,\m,k)$ be a local ring of depth $t$ and type $r$.
\begin{enumerate}[\rm(1)]
\item
Assume $t=0$.
Suppose that there is an exact sequence $0\to M\to F_1\xrightarrow{\partial}F_0$ of $R$-modules with $F_0,F_1$ free, $\image\partial\subseteq\m F_0$, $M=\bigoplus_{i=0}^n(\syz^ik)^{\oplus m_i}$ and $m_0=1$.
It then holds that $r=1$.
\item
The $R$-module $(\syz^t k)^{\oplus r}$ is $(t+2)$-syzygy.
\end{enumerate}
\end{lem}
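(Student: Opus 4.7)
For part \textup{(1)}, my plan is a double-counting of the socle of $M$. Since $\partial$ has entries in $\m$, one has $\partial(\soc F_1)\subseteq\m\cdot\soc F_1=0$, so $\soc F_1\subseteq M$; combined with the trivial inclusion $\soc M\subseteq\soc F_1$, this gives $\soc M=\soc F_1=(\soc R)^{\rank F_1}=k^{r\cdot\rank F_1}$. On the other hand, the same socle-inside-kernel trick applied to the minimal embeddings $\syz^ik\subseteq G_{i-1}$ in the minimal free resolution $G_\bullet\to k$ shows $\soc(\syz^ik)=\soc G_{i-1}=k^{r\beta_{i-1}(k)}$ for each $i\ge 1$. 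Decomposing $M$ according to the hypothesis and using $m_0=1$ yields $\dim_k\soc M=1+r\sum_{i\ge 1}m_i\beta_{i-1}(k)$. Comparing the two expressions for $\dim_k\soc M$ forces $r\mid 1$, hence $r=1$.

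For part \textup{(2)}, my plan is to exhibit $(\syz^tk)^{\oplus r}$ as a first syzygy of a first syzygy of a $t$-syzygy module. Choose a maximal $R$-regular sequence $\xx=x_1,\dots,x_t$ and set $\bar R:=R/(\xx)$. By change of rings (Rees's theorem), $\soc\bar R\cong\Ext^t_R(k,R)\cong k^r$; by the Koszul complex, $\syz^t_R\bar R$ is $R$-free (the top exterior power is $R$, mapping injectively into $R^t$ below it since $x_1$ is a nonzerodivisor). Set $D:=\bar R/\soc\bar R$, and claim that $D$ embeds in some $\bar R^n$. Indeed, $\Hom_{\bar R}(D,\bar R)=\operatorname{ann}_{\bar R}(\soc\bar R)=\m\bar R$, and the natural map $D\to\Hom_{\bar R}(\m\bar R,\bar R)$ sending $\bar r\mapsto(m\mapsto rm)$ has kernel $\operatorname{ann}_{\bar R}(\m\bar R)/\soc\bar R=0$; composing with the dual of a minimal $\bar R$-presentation of $\m\bar R$ produces the desired inclusion $D\hookrightarrow\bar R^n$. (In the degenerate case where $R$ is regular one has $\bar R=k$, $D=0$, and $\syz^tk$ is already $R$-free, so nothing needs to be proved.)

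With $D\hookrightarrow\bar R^n$ fixed, I form two short exact sequences of $R$-modules, $0\to k^{\oplus r}\to\bar R\to D\to 0$ and $0\to D\to\bar R^n\to Q\to 0$, and apply $\syz^t_R$ to each via the Horseshoe lemma. Since $\syz^t_R\bar R$ and $\syz^t_R\bar R^n$ are $R$-free — and the middle syzygies produced by the Horseshoe differ from the Koszul-minimal ones only by free summands — this yields short exact sequences $0\to(\syz^tk)^{\oplus r}\to G_1\to\syz^tD\to 0$ and $0\to\syz^tD\to G_2\to\syz^tQ\to 0$ with $G_1,G_2$ free. Since $\syz^tQ$ is tautologically a $t$-syzygy, the second sequence exhibits $\syz^tD$ as a $(t+1)$-syzygy, and then the first exhibits $(\syz^tk)^{\oplus r}$ as a $(t+2)$-syzygy, which is what we want. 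The main technical care lies in this Horseshoe bookkeeping — ensuring the middle syzygies of $\bar R$ and $\bar R^n$ really are $R$-free — but this is routine once one uses that the Koszul complex is the minimal $R$-free resolution of $\bar R$ of length exactly $t$.
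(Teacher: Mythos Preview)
Your proof is correct and follows essentially the same route as the paper. Part (1) is identical once one identifies $\soc(-)$ with $\Hom_R(k,-)$; in part (2) the paper obtains the embedding $D\hookrightarrow\bar R^{\,n}$ more directly as $\bar r\mapsto(\bar r y_1,\dots,\bar r y_n)$ for generators $y_i$ of $\m\bar R$---which is exactly what your biduality argument unwinds to---and then splices your two short exact sequences into a single four-term sequence before applying $\syz^t$.
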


\begin{proof}
(1) We may assume that $R$ is not a field.
Applying $\Hom_R(k,-)$ to the exact sequence and noting $\Hom_R(k,\partial)=0$, we get an isomorphism $\Hom_R(k,M)\cong\Hom_R(k,F_1)$.
Setting $s=\rank_RF_1$, we get
$$\textstyle
k^{\oplus rs}
\cong\Hom_R(k,F_1)
\cong\Hom_R(k,M)
\cong\Hom_R(k,\bigoplus_{i=0}^n(\syz^ik)^{\oplus m_i})
\cong\bigoplus_{i=0}^n\Hom_R(k,\syz^ik)^{\oplus m_i}.
$$
Hence $rs=\sum_{i=0}^nu_im_i$, where $u_i:=\dim_k\Hom_R(k,\syz^ik)$.
Note that for each $2\le i\le n$ there is an exact sequence $0\to\syz^ik\to R^{\oplus b_{i-1}}\xrightarrow{\delta_{i-1}}R^{\oplus b_{i-2}}$ with $\image\delta_{i-1}\subseteq\m R^{\oplus b_{i-2}}$ and $b_j=\beta_j(k)$ for each $j$.
Similarly as above, we obtain isomorphisms $k^{\oplus u_i}
\cong\Hom_R(k,\syz^ik)
\cong\Hom_R(k,R^{\oplus b_{i-1}})
\cong k^{\oplus rb_{i-1}}$, which imply $u_i=rb_{i-1}$ for all $2\le i\le n$.
As $R$ is not a field, the map $\Hom_R(k,\m)\to\Hom_R(k,R)$ induced from the inclusion map $\m\to R$ is an isomorphism.
Hence $u_i=rb_{i-1}$ for all $1\le i\le n$.
We have $u_0=\dim_k\Hom_R(k,k)=1$, while $m_0=1$ by assumption.
We obtain $rs=\sum_{i=0}^nu_im_i=u_0m_0+\sum_{i=1}^nrb_{i-1}m_i=1+\sum_{i=1}^nrb_{i-1}m_i$, and get $r(s-\sum_{i=1}^nb_{i-1}m_i)=1$.
This forces us to have $r=1$.

(2) Take an $R$-sequence $\xx=x_1,\dots,x_t$.
Then the socle of $R/(\xx)$ is isomorphic to $k^{\oplus r}$.
Let $y_1,\dots,y_n$ be elements of $\m$ whose residue classes form a system of generators of $\m/(\xx)$.
Then we have an exact sequence $0\to k^{\oplus r}\to R/(\xx)\xrightarrow{m}(R/(\xx))^{\oplus n}\to L\to0$, where $m$ is given by the transpose of the matrix $(y_1,\dots,y_n)$.
Applying the functor $\syz^t=\syz_R^t$, we get an exact sequence $0\to(\syz^tk)^{\oplus r}\to P\to Q\to\syz^tL\to0$ of $R$-modules with $P,Q$ free.
Consequently, we obtain the containment $(\syz^tk)^{\oplus r}\in \Syz_{t+2}(R)$.
\end{proof}

Now we can prove the following theorem.

\begin{thm}\label{type 1}
Let $(R,\m,k)$ be a local ring of dimension $d$ and depth $t$.
\begin{enumerate}[\rm(1)]
\item
The module $\syz^tk$ is $(t+2)$-syzygy if and only if the local ring $R$ has type one.
\item
Suppose that the subcategory $\Syz_{t+2}(R)$ is closed under direct summands.
Then $R$ has type one.
\item
Suppose that $R$ is Cohen--Macaulay.
Then the ring $R$ is Gorenstein if and only if $\syz^dk$ is $(d+2)$-syzygy, if and only if $\Syz_{d+2}(R)$ is closed under direct summands.
\end{enumerate}
\end{thm}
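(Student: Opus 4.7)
The plan is to deduce part (3) as a purely formal consequence of parts (1) and (2) of this very theorem, together with the classical fact that a Cohen--Macaulay local ring is Gorenstein if and only if it has type one. Since $R$ is Cohen--Macaulay, the depth $t$ coincides with the dimension $d$, so parts (1) and (2) apply with $t$ replaced throughout by $d$.

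I would establish the three-way equivalence by closing the cycle
\[
R \text{ is Gorenstein}\;\Longrightarrow\;\Syz_{d+2}(R)\text{ is closed under direct summands}\;\Longrightarrow\;\syz^d k\in\Syz_{d+2}(R)\;\Longrightarrow\;R\text{ is Gorenstein}.
\]
For the first implication, when $R$ is Gorenstein every maximal Cohen--Macaulay $R$-module is totally reflexive and therefore belongs to $\Syz_n(R)$ for every $n\ge 0$; combined with the depth lemma, which forces any module in $\Syz_{d+2}(R)$ to be maximal Cohen--Macaulay, this yields $\Syz_{d+2}(R)=\cm(R)$, which is patently closed under direct summands. The second implication is immediate by applying part (2) (to deduce that $R$ has type one), followed by the ``if'' direction of part (1) (which produces $\syz^d k\in\Syz_{d+2}(R)$). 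The third implication is the ``only if'' direction of part (1) (giving type one), combined with the classical theorem that a Cohen--Macaulay local ring of type one is Gorenstein.

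I do not anticipate any serious obstacle, as all the real content has already been placed in parts (1) and (2); part (3) is just the specialization to the Cohen--Macaulay setting, where the ``type one'' condition collapses to Gorensteinness, thereby upgrading the conclusions of (1) and (2) to a characterization of Gorenstein rings.
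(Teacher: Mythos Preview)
Your argument for part (3) is correct and is exactly what the paper does (the paper simply says ``Assertion (3) is a direct consequence of (1) and (2)'', so you have in fact supplied more detail than the original). However, your proposal addresses \emph{only} part (3) and treats parts (1) and (2) as given, whereas the theorem as stated comprises all three parts, and the real mathematical content resides in (1).

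The paper deduces (2) from (1) together with Lemma~\ref{4-2}(2), which shows that $(\syz^t k)^{\oplus r}$ is always a $(t+2)$-syzygy; closure under summands then yields $\syz^t k\in\Syz_{t+2}(R)$, and (1) gives $r=1$. The substantial work is the ``only if'' direction of (1). There one takes an exact sequence $0\to\syz^t k\to F_{t+1}\to\cdots\to F_0$ witnessing that $\syz^t k$ is $(t+2)$-syzygy; if $\syz^t k$ has a nonzero free summand then $R$ is regular and $r=1$, so one may assume all differentials land in $\m$ times the target. One then chooses a regular sequence $\xx=x_1,\dots,x_t$ in $\m\setminus\m^2$ which is also regular on the relevant image, reduces modulo $(\xx)$, and uses the decomposition $\overline{\syz_R^t k}\cong\bigoplus_{i=0}^t(\syz_{\overline R}^i k)^{\oplus\binom{t}{i}}$. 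At this point one is over a depth-zero ring and invokes Lemma~\ref{4-2}(1), a socle computation showing that an embedding of such a direct sum (with the $i=0$ piece appearing with multiplicity one) into a free module with image in $\m$ times a further free module forces $\r(R)=1$. None of this is present in your proposal; without it, parts (1) and (2) remain unproved and the cycle you set up for (3) has no foundation.
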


\begin{proof}
Assertion (2) immediately follows from (1) and Lemma \ref{4-2}(2).
Assertion (3) is a direct consequence of (1) and (2).
The ``if'' part of (1) follows from Lemma \ref{4-2}(2).
To show the ``only if'' part, put $r=\r(R)$. 
By assumption, there is an exact sequence $0\to\syz^tk\to F_{t+1}\xrightarrow{\partial_{t+1}}F_t\xrightarrow{\partial_t}\cdots\xrightarrow{\partial_1}F_0\to M\to0$ with $F_i$ free for all $0\le i\le t+1$.
If $\syz^tk$ has a nonzero free summand, then $R$ is regular by \cite[Corollary 1.3]{D} and $r=1$.
We may assume that $\syz^tk$ has no nonzero free summand, and hence we may assume $\image\partial_i\subseteq\m F_{i-1}$ for all $1\le i\le t+1$.
Set $N=\image\partial_t$.
The depth lemma shows $\depth N\ge t$.
Choose a regular sequence $\xx=x_1,\dots,x_t$ on $R$ and $N$ with $x_i\in\m\setminus\m^2$ for all $1\le i\le t$.
Putting $\overline{(-)}=(-)\otimes_RR/(\xx)$ and applying \cite[Corollary 5.3]{syz2}, we have an isomorphism $\overline{\syz_R^tk}\cong\bigoplus_{i=0}^t(\syz_{\overline R}^ik)^{\oplus\binom{t}{i}}$, and an exact sequence $0\to\overline{\syz^tk}\to\overline{F_{t+1}}\xrightarrow{\overline{\partial_{t+1}}}\overline{F_t}\to\overline N\to0$ is induced.
We apply Lemma \ref{4-2}(1) to obtain $r=1$.
\end{proof}

Theorem \ref{type 1}(1) may lead us to wonder whether the condition that $\syz^tk$ is $(t+2)$-syzygy already implies that $R$ is Gorenstein.
The next example answers in the negative.

\begin{ex}
Let $R$ be a local ring with $R/(\zz)\cong k[\![x,y]\!]/(x^2,xy)$ for some $R$-sequence $\zz=z_1,\dots,z_t$.
Then $R$ has depth $t$ and type $1$, so $\syz^tk$ is $(t+2)$-syzygy by Theorem \ref{type 1}(1), but $R$ is not Gorenstein.
\end{ex}  

In view of Theorem \ref{type 1}, it is natural to ask the following question.

\begin{ques}\label{ques-summand}
Let $R$ be a local ring with residue field $k$, and let $n\ge\depth R$ be an integer.
Assume that $\syz^n k$ is $(n+2)$-syzygy (note that this assumption is satisfied if $\Syz_{n+2}(R)$ is closed under direct summands by Lemma \ref{4-2}(2)).
Does then $R$ have type one?
What if we also assume $R$ is Cohen--Macaulay?
\end{ques} 

Note that Theorem \ref{type 1}(1) exactly says that Question \ref{ques-summand} has an affirmative answer when $n=t$, which is why we were able to derive Theorem \ref{type 1}(3).
We record some special cases, apart from that already contained in Theorem \ref{type 1}(3), where Question \ref{ques-summand} has a positive answer.

\begin{prop}\label{syz-ex}
Let $(R,\m,k)$ be a local ring of depth $t$.
Suppose that $\Syz_{n+2}(R)$ is closed under direct summands for some $n\ge t$.
Then $R$ has type one in either of the following two cases.
\begin{enumerate}[\rm(1)]
\item
One has $t>0$ and there is an $R$-sequence $\xx=x_1,\dots, x_{t-1}$ such that $\m/(\xx)$ is decomposable.
\item
The module $\syz^t k$ is a direct summand of $\syz^{t+l} k$ for some $l>0$ (this holds if $R$ is singular and Burch).
\end{enumerate}
\end{prop}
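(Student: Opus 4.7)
The plan is to reduce both cases to Theorem \ref{type 1}(1), which identifies $r(R)=1$ with $\syz^tk\in\Syz_{t+2}(R)$. The unified starting point is: by Lemma \ref{4-2}(2), $(\syz^tk)^{\oplus r}$ is $(t+2)$-syzygy, so $(\syz^nk)^{\oplus r}=\syz^{n-t}((\syz^tk)^{\oplus r})\in\Syz_{n+2}(R)$; the summand closure of $\Syz_{n+2}(R)$ then forces $\syz^nk\in\Syz_{n+2}(R)$.

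For case (2), iterating the containment $\syz^tk\lessdot\syz^{t+l}k$ yields $\syz^tk\lessdot\syz^{t+jl}k$ for every $j\ge0$. Choosing $j$ with $t+jl\ge n$, the module $\syz^{t+jl}k=\syz^{(t+jl)-n}(\syz^nk)$ lies in $\Syz_{n+2}(R)$, since taking syzygies preserves (and in fact enhances) the $(n+2)$-syzygy property. Summand closure then gives $\syz^tk\in\Syz_{n+2}(R)\subseteq\Syz_{t+2}(R)$, and Theorem \ref{type 1}(1) concludes. For the parenthetical, I would cite the established result that a singular Burch ring satisfies $k\lessdot\syz^lk$ for some $l>0$ (by Burch's classical theorem or the more recent work of Dao--Kobayashi--Takahashi); applying $\syz^t$ to this relation yields $\syz^tk\lessdot\syz^{t+l}k$.

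For case (1), the plan is to reduce to case (2) by producing the summand relation $\syz^t_Rk\lessdot\syz^{t+l'}_Rk$ for some $l'>0$. Set $\bar R:=R/(\xx)$, which has depth $1$. The decomposability $\bar\m=A'\oplus B'$ (with $A',B'\ne0$) forces $\bar R\cong(\bar R/A')\times_k(\bar R/B')$ to be a nontrivial fiber product of local rings sharing residue field $k$, which is known to be a Burch ring; consequently $k\lessdot\syz^l_{\bar R}k$ for some $l>0$. To transfer this to $R$, I would use the Horseshoe-derived decomposition $\syz^{t+j}_Rk\approx\syz^{t+j-1}_RA'\oplus\syz^{t+j-1}_RB'$, which follows from applying $\syz^{t+j-1}$ to the sequence $0\to(\xx)\to\m\to\m/(\xx)\to0$ and observing that $\pd_R((\xx))\le t-2$ via the Koszul resolution (so $\syz^{t+j-1}((\xx))$ is free). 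Combined with a comparison formula relating $R$- and $\bar R$-syzygies of $k$ after tensoring with $\bar R$, the $\bar R$-level summand $k\lessdot\syz^l_{\bar R}k$ lifts to the required $R$-level summand relation $\syz^t_Rk\lessdot\syz^{t+l'}_Rk$, whence case (2) completes the proof.

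The main obstacle lies in case (1): executing the transfer from the $\bar R$-level Burch summand to the $R$-level summand relation requires a careful passage through the Horseshoe decompositions at multiple syzygy levels, keeping track of free-summand cancellations in the $\approx$-relations and verifying that $k\lessdot\syz^l_{\bar R}k$ indeed propagates to a summand of $R$-syzygies of $k$ of comparable shift.
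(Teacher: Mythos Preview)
Your treatment of case (2) is essentially the paper's argument (iterate the summand relation, land in $\Syz_{n+2}(R)$, apply Theorem~\ref{type 1}), though the detour through the ``unified starting point'' is unnecessary: simply take $j$ with $t+jl\ge n+2$, so that $\syz^{t+jl}k\in\Syz_{t+jl}(R)\subseteq\Syz_{n+2}(R)$ directly. However, you misquote the Burch property in both places you invoke it. For a singular Burch local ring of depth $t'$, the result of \cite{burch} gives $\syz^{t'}k\lessdot\syz^{t'+2}k$, \emph{not} $k\lessdot\syz^lk$; when $t'>0$ the latter is false on depth grounds alone. For the parenthetical in (2) this is harmless, since the correct statement is precisely what you need. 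But in case (1) it breaks your plan: $\bar R=R/(\xx)$ has depth $1$, so even granting that a nontrivial fiber product is Burch, you obtain only $\bar\m=\syz_{\bar R}k\lessdot\syz^3_{\bar R}k$. (Concretely, for $\bar R=k[\![x,y]\!]/(xy)$ one has $\syz^l_{\bar R}k=\bar\m$ for all $l\ge1$, and $k\not\lessdot\bar\m$.) With only $\bar\m\lessdot\syz^l_{\bar R}k$ in hand, the reduction to case (2)---i.e., producing $\syz^t_Rk\lessdot\syz^{t+l'}_Rk$ over $R$---has no clear route: lifting via $\syz^{t-1}_R$ lands you in $\syz^{t-1}_R(\syz^l_{\bar R}k)$, an $R$-syzygy of an $\bar R$-module rather than of $k$.

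The paper does not attempt this reduction. It invokes \cite[Theorem~A]{NT} directly: for $\bar R$ with decomposable maximal ideal and any $s\ge0$, one has $\bar\m\lessdot\syz^{s+3}_{\bar R}N$ with $N=k\oplus\syz_{\bar R}k\oplus\syz^2_{\bar R}k$. Applying $\syz^{t-1}_R$ together with the change-of-rings lemma \cite[Lemma~4.2]{NT} yields $\syz^t_Rk\oplus F\lessdot\syz^{s+t+2}_RN\oplus G\in\Syz_{s+t+2}(R)$; choosing $s=n-t$ and using summand closure gives $\syz^t_Rk\in\Syz_{n+2}(R)\subseteq\Syz_{t+2}(R)$, whence Theorem~\ref{type 1} finishes. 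The point is that one only needs $\syz^t_Rk$ to be a summand of \emph{some} module in $\Syz_{n+2}(R)$, not specifically of $\syz^{t+l'}_Rk$.
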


\begin{proof}
(1) Write $\overline R=R/(\xx)$ and $\overline \m=\m/(\xx)$.
Since $\overline \m$ is decomposable, $\overline R$ is singular.
Fix $s\ge0$.
By \cite[Theorem A]{NT} we have $\syz_{\overline R}k=\overline\m\lessdot\syz_{\overline R}^3L\oplus\syz_{\overline R}^4L\oplus\syz_{\overline R}^5L=\syz^{s+3}_{\overline R}N$, where $L=\syz_{\overline R}^sk$ and $N=k\oplus\syz_{\overline R}k\oplus\syz^2_{\overline R}k$.
Taking the functor $\syz_R^{t-1}$ and applying \cite[Lemma 4.2]{NT}, we obtain $\syz_R^tk\oplus F\lessdot\syz_R^{s+t+2}N\oplus G\in\Syz_{s+t+2}(R)$ for some free $R$-modules $F,G$.
By assumption, we can choose $s\ge0$ so that $\Syz_{s+t+2}(R)$ is closed under direct summands.
Then we have $\syz_R^t k\in\Syz_{s+t+2}(R)\subseteq\Syz_{t+2}(R)$, and Theorem \ref{type 1}(2) implies $\r(R)=1$.

(2) Applying the functor $\syz^l$ to the relation $\syz^tk\lessdot\syz^{t+l}k$ repeatedly, we have $\syz^t k\lessdot\syz^{t+bl}k$ for every $b\ge 1$.
Choose $b$ so that $t+bl\ge n+2$.
As $\Syz_{n+2}(R)$ is closed under direct summands, we get $\syz^t k\in\Syz_{n+2}(R)$.
Theorem \ref{type 1} implies $\r(R)=1$.
If $R$ is a singular Burch ring, then $\syz^t k\lessdot\syz^{t+2} k$ by \cite[Proposition 5.10]{burch}.
\end{proof}

Next we show that the converse to Theorem \ref{type 1}(2) is not true, that is, it is possible that $R$ is a local ring of depth $t$ and type $1$ but $\Syz_{t+2}(R)$ is not closed under direct summands.
Moreover, we shall show that for large classes of local rings $R$ with decomposable maximal ideal and of integers $n$ the subcategory $\Syz_n(R)$ is not closed under direct summands.
For this, we begin with establishing a lemma.

\begin{lem}\label{depth}
Let $(R,\m,k)$ be local and with $\depth R=t$.
Then $\depth M=t$ for each $0\ne M\in\Syz_{t+2}(R)$.
\end{lem}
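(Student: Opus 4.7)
The plan is to reduce to the case $\depth R=0$ and then exhibit a socle element inside $M$ via a minimal presentation.

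For the lower bound $\depth M\ge t$, the exact sequence defining $M$ as a $(t{+}2)$-syzygy embeds $M$ in the free module $P_{t+1}$, so $\ass M\subseteq\ass R$. Any maximal $R$-regular sequence $\xx=x_1,\dots,x_t$ in $\m$ hence stays $M$-regular (inductively, $M/\xx_iM\hookrightarrow P_{t+1}/\xx_iP_{t+1}$, since $x_{i+1}$ is a nonzerodivisor on $M/\xx_iM$), so $\depth M\ge t$.

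For the upper bound $\depth M\le t$, I reduce modulo $\xx$. Each image $K_i:=\image(P_{i+1}\to P_i)$ lies in the free module $P_i$ and is therefore $\xx$-regular, so the sequence $0\to M\to P_{t+1}\to\cdots\to P_0$ stays exact after tensoring with $\overline R:=R/(\xx)$. Writing $\overline M:=M/\xx M$, this yields $\overline M\in\Syz_{t+2}(\overline R)\subseteq\Syz_2(\overline R)$, with $\overline M\ne 0$ by Nakayama. Since $\xx$ is $M$-regular we have $\depth_R M=\depth_{\overline R}\overline M+t$, and $\depth\overline R=0$ because $\xx$ is maximal. Thus the task reduces to showing: over a local ring $(S,\n)$ with $\depth S=0$, every nonzero $N\in\Syz_2(S)$ satisfies $\depth N=0$.

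For this reduced claim, write $N=\ker(d\colon F_1\to F_0)$ for some presentation with $F_1,F_0$ free. A standard minimalization---iteratively splitting off matching rank-one free summands of $F_1$ and $F_0$ corresponding to unit entries of $d$, which preserves $\ker d$---terminates before $F_1$ becomes zero (guaranteed by $N\ne 0$) and leaves $d(F_1)\subseteq\n F_0$. Now choose any nonzero $z\in\soc S$, available since $\depth S=0$. For each $f\in F_1$ we compute $d(zf)=z\,d(f)\in z\cdot\n F_0=0$, so $zf\in N$; hence $zF_1\subseteq N$. Since $F_1\ne 0$, the nonzero submodule $zF_1\subseteq N$ is annihilated by $\n$, forcing $\n\in\ass N$ and thus $\depth N=0$.

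The most delicate step will be the minimalization argument in the depth-zero reduction: one must confirm that the repeated removal of unit entries of $d$ does not collapse $F_1$ to zero---ensured precisely by $N\ne 0$ together with the identity $\ker d'=\ker d=N$ at each stage---and that after the process every entry of $d$ lies in $\n$, which is exactly what permits the crucial annihilation $z\cdot d(F_1)=0$ by the socle element.
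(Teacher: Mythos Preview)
Your overall strategy---reduce modulo a maximal $R$-regular sequence and then locate a socle element in the depth-zero case---is sound and differs from the paper's proof, which works directly with $\Ext_R^t(k,-)$ applied to a minimal presentation $0\to M\to F\xrightarrow{f}G$ and exploits that $\Ext_R^t(k,f)=0$ because $\image f\subseteq\m G$. However, your reduction step contains a genuine error. The assertion ``each image $K_i=\image(P_{i+1}\to P_i)$ lies in the free module $P_i$ and is therefore $\xx$-regular'' is false: embedding in a free module only forces the \emph{first} term of an $R$-regular sequence to be regular on $K_i$. For instance, with $R=k[\![x,y,z]\!]/(z^2)$ (so $t=2$), $C=k$ and $\xx=x,y$, one has $K_0=\m$ with $\depth\m=1$, so $x,y$ is not $\m$-regular. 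Correspondingly the tensored complex is \emph{not} exact everywhere: its homology at spot $i$ is $\Tor_i^R(C,\overline R)$, which in this example is nonzero for $i=1,2$. Thus neither the claim that the whole sequence stays exact nor the conclusion $\overline M\in\Syz_{t+2}(\overline R)$ is established. Your inductive lower-bound argument has the same defect (the parenthetical presupposes the injection it is meant to justify).

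The repair is that you only need $\overline M\in\Syz_2(\overline R)$, and this much \emph{does} hold. Since $\xx$ is $R$-regular of length $t$, we have $\pd_R\overline R=t$, whence $\Tor_i^R(C,\overline R)=0$ for all $i>t$; equivalently $\Tor_1^R(\syz^{t+1}C,\overline R)=\Tor_1^R(\syz^tC,\overline R)=0$, so tensoring the two short exact sequences at the top of the resolution yields the exact sequence $0\to\overline M\to\overline{P_{t+1}}\to\overline{P_t}$. The lower bound $\depth M\ge t$ follows at once from the depth lemma applied along the resolution, as the paper notes. With these two corrections your depth-zero endgame via minimalization and the socle element $z$ is correct and completes the proof.
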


\begin{proof}
By the depth lemma it suffices to show $\depth M\le t$, which holds if $R\lessdot M$.
We may let $M\cong\syz^{t+2}C$ for some $R$-module $C$, and get an exact sequence $0\to M\to F\xrightarrow{f} G\to N\to 0$ with $F,G$ free, $N$ $t$-syzygy and $\image f\subseteq\m G$.
Again by the depth lemma $\depth N\ge t$.
Break the exact sequence down as $0\to M\to F\to\syz N\to 0$ and $0\to \syz N\to G\to N\to 0$.
We get exact sequences $\Ext_R^t(k,M)\xrightarrow{a}\Ext_R^t(k,F)\xrightarrow{b}\Ext_R^t(k,\syz N)$ and $0=\Ext_R^{t-1}(k,N)\to\Ext_R^t(k,\syz N)\xrightarrow{c}\Ext_R^t(k,G)$.
Note that $cb=\Ext_R^t(k,f)=0$.
As $c$ is injective, we have $b=0$ and $a$ is surjective.
Since $\Ext_R^t(k,F)\ne0$, we obtain $\Ext_R^t(k,M)\ne0$.
\end{proof}  

\begin{rem}
The $(t+2)$nd threshold in Lemma \ref{depth} is sharp.
Indeed, let $(R,\m)$ be a local ring with $\dim R>0=\depth R$.
Then there exists $\m\ne\p\in\ass R$.
We have $\depth R/\p>0$ and $0\ne R/\p\in\Syz_1(R)$.
\end{rem}

Now we produce the promised classes of local rings $R$ and integers $n$.

\begin{prop}\label{depthzero}
Let $(R,\m,k)$ be a local ring such that $\m=I\oplus J$ for some nonzero ideals $I,J$ of $R$.
\begin{enumerate}[\rm(1)]
\item
Suppose that $\depth R/I=0$ and $\depth R/J\ge 1$.
Then one has $\depth R=0$, and the subcategory $\Syz_n(R)$ is not closed under direct summands for every $n\ge 2$.
\item
Suppose that $I$ is indecomposable and $\depth R/I \ge 2$.
Then one has $\depth R\le1$, and the subcategory $\Syz_n(R)$ is not closed under direct summands for every $n\ge4$.
\end{enumerate}
\end{prop}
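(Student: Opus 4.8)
The plan is to handle each part as a depth computation followed by a non-closedness argument, the latter resting on Lemma~\ref{depth} and on the fact that the hypothesis $\m=I\oplus J$ exhibits $R$ as the fibre product $R/I\times_k R/J$. In particular $IJ=0$, and as $R$-modules $I\cong\m_{R/J}$ and $J\cong\m_{R/I}$; moreover, for every $R/I$-module $M$ there is an exact sequence $0\to I^{\oplus\mu(M)}\to\syz_R M\to\syz_{R/I}M\to0$ which, over a fibre product, splits, giving $\syz_R M\approx\m_{R/J}^{\oplus\mu(M)}\oplus\syz_{R/I}M$ (and symmetrically after interchanging $I$ and $J$)---these are the standard facts on syzygies over fibre product rings, cf.\ \cite{NT}.

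\emph{Depth.} In (1): since $\depth R/I=0$ there is $x\in R\setminus I$ with $\m x\subseteq I$, and $x$ is not a unit (else $\m=\m xx^{-1}\subseteq I$, contradicting $J\ne0$), so writing $x=x_I+x_J$ with $x_I\in I$ and $0\ne x_J\in J$ we get $\m x_J\subseteq\m x+\m x_I\subseteq I$ and $\m x_J\subseteq\m J\subseteq J$, hence $\m x_J=0$ and $\depth R=0$. In (2): if $\depth R\ge2$, the depth lemma applied to $0\to I\to R\to R/I\to0$ gives $\depth_R I\ge\min\{\depth R,\depth R/I+1\}\ge2$; but $I\cong\m_{R/J}$, and the maximal ideal of a local ring that is not a field always has depth at most $1$ (apply the depth lemma to $0\to\m_{R/J}\to R/J\to k\to0$), a contradiction, so $\depth R\le1$.

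\emph{Non-closedness.} Set $t=\depth R$ (so $t=0$ in (1) and $t\le1$ in (2)). By Lemma~\ref{depth} every nonzero module in $\Syz_{t+2}(R)$, hence in $\Syz_n(R)$ for $n\ge t+2$, has depth exactly $t$; thus it suffices to find, for each $n$ in the asserted range, a nonzero direct summand of depth $>t$ of the module $\syz_R^n k\in\Syz_n(R)$. Iterating $\syz_R k=\m=I\oplus J$ gives $\syz_R^n k=\syz_R^{n-1}I\oplus\syz_R^{n-1}J$, and the fibre-product splitting propagates summands through every iteration. Tracking them one finds: in (1), for every $n\ge2$ the module $\syz_R^n k$ has $I\cong\m_{R/J}$ as a direct summand (using $\edim(R/I)\ge1$ and that $R/I$, a non-field of depth $0$, is singular, so $\beta_i^{R/I}(k)\ge1$ for all $i$ and these copies are never cancelled), and $\depth_R I=\depth_{R/J}\m_{R/J}\ge1>0=t$ since $\depth R/J\ge1$; in (2), for every $n\ge4$ the module $\syz_R^n k$ has a nonzero summand $\syz_{R/I}^i k$ with $i\ge2$, and $\depth\syz_{R/I}^i k\ge\min\{i,\depth R/I\}\ge2>t$. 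In both cases Lemma~\ref{depth} rules this summand out of $\Syz_n(R)$, so $\Syz_n(R)$ is not closed under direct summands.

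The only genuine difficulty lies in that bookkeeping: the high-depth summand of $\syz_R^n k$ moves between the ``$R/I$-part'' and the ``$R/J$-part'' with the parity of $n$ and can disappear momentarily when $R/I$ or $R/J$ is regular of small dimension, so the clean way to get uniformity in $n$ is to feed $\m$ through the fibre-product syzygy machinery of \cite{NT} rather than to iterate by hand; this is also the reason one proves the bound $n\ge4$ in (2) rather than the presumably optimal $n\ge t+2$.
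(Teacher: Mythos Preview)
Your approach is correct, and for part (2) it is genuinely different from the paper's. For (1) the two arguments run in parallel: both exhibit $I$ (with $\depth_R I\ge1>0=t$) as the forbidden summand via Lemma~\ref{depth}; the paper obtains $I\lessdot\m\lessdot\syz^3(\syz^ik)\oplus\syz^4(\syz^ik)\oplus\syz^5(\syz^ik)$ by quoting \cite[Theorem~A]{NT}, while you iterate the fibre-product syzygy splitting. For (2) the paper again places $I$ inside some object of $\Syz_n(R)$ and then shows $I\notin\Syz_4(R)$ by a detailed argument using Moore's decomposition \cite[Proposition~4.2]{moore} together with the indecomposability of $I$. Your route is shorter: take $\syz_{R/I}^2k$ (nonzero since $\dim R/I\ge\depth R/I\ge2$, and of $R$-depth $\ge2>t$) as the forbidden summand and apply Lemma~\ref{depth} directly. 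This never uses the indecomposability of $I$, so you have in fact proved a stronger statement than (2) as written. Your ``bookkeeping'' worry is overstated: since neither $R/I$ nor $R/J$ is a field, a one-line induction on the splittings $\syz_R I\cong J^{\oplus\edim(R/J)}\oplus\syz_{R/J}^2k$ and $\syz_R J\cong I^{\oplus\edim(R/I)}\oplus\syz_{R/I}^2k$ shows that both $I$ and $J$ persist as summands of $\syz_R^nk$ for every $n\ge1$, whence $\syz_{R/I}^2k\lessdot\syz_RJ\lessdot\syz_R^nk$ for all $n\ge2$; no parity issue arises and \cite[Theorem~A]{NT} is not needed here. Two minor points: the paper handles the depth bounds by simply citing \cite[Fact~3.1]{NT}, namely $\depth R=\min\{\depth R/I,\depth R/J,1\}$; and your justification of $\depth\m_{R/J}\le1$ via the depth lemma only covers $\depth R/J\ge1$---when $\depth R/J=0$ one should instead observe $0\ne\soc(R/J)\subseteq\m_{R/J}$.
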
  

\begin{proof}
By \cite[Fact 3.1]{NT} we have $\depth R=\min\{\depth R/I,\depth R/J,1\}\le1$.

(1) Since $I\cong \m/J\subseteq R/J$ and $\depth R/J>0$, we see that $\depth I\ge 1$.
Lemma \ref{depth} yields $I\notin\Syz_2(R)$.
By \cite[Theorem A]{NT} we get $I\lessdot\m\lessdot\syz^3(\syz^ik)\oplus\syz^4(\syz^ik)\oplus\syz^5(\syz^ik)\in \Syz_{i+3}(R)\subseteq\Syz_{i+2}(R)$ for every $i\ge 0$.
If $\Syz_n(R)$ is closed under direct summands for some $n\ge 2$, then choosing $i=n-2$, we obtain $I\in\Syz_n(R)\subseteq\Syz_2(R)$, which is a contradiction.

(2) Note that $IJ=0$.
We also have $J^2\ne 0$, as otherwise $\m^2=(I+J)(I+J)=I^2\subseteq I$, contradicting $\depth R/I\ge 2$.
Similarly as in the proof of (1), for each $i\ge0$ there exists $X\in\Syz_{i+3}(R)$ with $I\lessdot X$.

Assuming that $\Syz_n(R)$ is closed under direct summands for some $n\ge4$, we shall derive a contradiction.
Choosing $i=n-3$, we get $I\in \Syz_n(R)\subseteq \Syz_4(R)$.
The equality $IJ=0$ implies that $I$ does not have a nonzero free summand.
Hence $I\cong\syz^4H$ for some $R$-module $H$.
Putting $M=\syz^3H$, we get an exact sequence $0\to I\to R^{\oplus a}\to M\to 0$.
Note that $M\ne 0$.
By \cite[Proposition 4.2]{moore} there are an $R/I$-module $A$ and an $R/J$-module $B$ such that $M\cong A \oplus B$.
Now $I=\syz_R M\cong \syz_RA \oplus \syz_R B$.
Then indecopmosability of $I$ implies $\syz_R A=0$ or $\syz_RB=0$, hence $A$ or $B$ is $R$-free.
As $IA=0=JB$, either $A=0$ or $B=0$.
If $A=0$, then we get an exact sequence $0\to I\to R^{\oplus a}\to B\to 0$ and $I,B$ are annihilated by $J$, whence $J^2R^{\oplus a}=0$, contradicting $J^2\ne0$.
We get $B=0$ and an exact sequence $0\to I\to R^{\oplus a}\to A\to 0$.
As $IA=0$, the surjection $R^{\oplus a}\to A$ factors through the canonical surjection $R^{\oplus a}\to (R/I)^{\oplus a}$, which induces a surjection $I\to \syz_{R/I}A$.
Since $IJ=0$, the module $\syz_{R/I}A$ is annihilated by $I,J$ and by $I+J=\m$.
Thus $\syz_{R/I}A\cong k^{\oplus s}$ for some $s\ge 0$.
But $\syz_{R/I} A$ embeds inside a free $R/I$-module which has positive depth, hence $k$ cannot be a summand of $\syz_{R/I} A$, thus $\syz_{R/I}A=0$.
Therefore $A$ is $R/I$-free and has depth at least $2$.
But $A\cong M$ has depth at most $1$ by Lemma \ref{depth}.
We now have a desired contradiction.
\end{proof}

The ring $R$ in the first example below shows that the converse to Theorem \ref{type 1}(2) does not hold.
The second example presents a local ring $R$ of depth $1$ and type $1$ such that $\Syz_n(R)$ is not closed under direct summands for every $n\ge4$, which concretely illustrates Proposition \ref{depthzero}(2).
The third example shows that the assumption $\depth R/I\ge 2$ in Proposition \ref{depthzero}(2) cannot be dropped.

\begin{ex}
Let $k$ be a field.
In each of the following statements, $\m$ denotes the maximal ideal of $R$.
\begin{enumerate}[(1)]
\item
Consider the local ring $R=k[\![x,y]\!]/(x^2,xy)$.
Then $\m=(y) \oplus (x)$, $\depth R/(y)=0$ and $\depth R/(x)=1$.
Proposition \ref{depthzero}(1) shows that $\Syz_n(R)$ is not closed under direct summands for all $n\ge 2$.
\item
Let $R=k[\![x,y,z]\!]/(xy,xz)$.
Then $\m=(x)\oplus(y,z)$ and $(x)$ is indecomposable with $\depth R/(x)=2$.
Proposition \ref{depthzero}(2) implies that $\Syz_{n+3}(R)$ is not closed under direct summands for every $n\ge4$.
\item
Let $R=k[\![x,y]\!]/(xy)$.
Then $\m=(x)\oplus(y)$ and $(x)$ is indecomposable with $\depth R/(x)=1$.
As $R$ is a $1$-dimensional Gorenstein ring, $\Syz_n(R)=\cm(R)$ is closed under direct summands for any $n\ge1$.
\end{enumerate}
\end{ex}  

\section{Closedness under extensions and syzygies, and totally reflexive modules} 

In this section, we derive some consequences of $\tf_n (R)$ being closed under extensions or syzygies for certain values of $n$ depending on the depth of the local ring $R$.
We will see that in the most reasonable cases, if $\tf_n(R)$ is resolving, then it coincides with the category of totally reflexive modules.

We begin with investigating when the subcategory $\tf_n(R)$ is closed under extensions or syzygies.
For two subcategories $\X,\Y$ of $\mod R$ we denote by $\X \ast \Y$ the subcategory consisting of modules $X$ that fits into an exact sequence $0\to M\to X\to N\to 0$ with $M\in \X$ and $N\in \Y$.

\begin{prop}\label{tf-syz}
Let $n$ be either a nonnegative integer or $\infty$.
The following statements hold.
\begin{enumerate}[\rm(1)]
\item
If $M$ is an $R$-module such that $(\add R)\ast M\subseteq \tf_n(R)$, then there is a containment $\syz M\in \tf_{n+1}(R)$.
\item
If the subcategory $\tf_n(R)$ is closed under extensions, then the inclusion $\syz\tf_n(R)\subseteq\tf_{n+1}(R)$ holds.
\item
The subcategory $\tf_n(R)$ is resolving if (and only if) $\tf_n(R)$ is closed under extensions.
\item
Suppose that $\tf_{n+1}(R)$ is closed under extensions.
If $R$ satisfies $(\ss_n)$, then $R_\p$ is Gorenstein for all $\p\in\spec R$ with $\height\p=n$.
If $R$ satisfies $(\ss_{n+1})$ and is local with $\dim R\ge n$, then $R$ satisfies $(\g_n)$.
\item
Suppose that $R$ is a local ring of depth $t$.
\begin{enumerate}[\rm(a)]
\item
Let $n\ge t+2$.
Then $\tf_{n}(R)=\gp(R)$ if (and only if) $\tf_{n} (R)$ is closed under syzygies.
\item
Let $n\ge t+1$.
Then $\tf_n(R)=\gp(R)$ if (and only if) $\tf_{n}(R)$ is closed under extensions.
When $n=t+1$, it is also equivalent to the Gorenstein property of the local ring $R$.
\end{enumerate}
\end{enumerate}
\end{prop}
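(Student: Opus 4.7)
I will tackle the six items in order, with the real work concentrated in (1), (4), and (5)(a). The engine for (1) is the four-term exact sequence
\[
0 \to \Ext^1_R(M,R) \to \tr M \to F_2^\ast \to \tr(\syz M) \to 0
\]
extracted from a projective resolution $F_2\to F_1\to F_0\to M\to 0$ by reading off the cohomology of the dualized complex $F_0^\ast\to F_1^\ast\to F_2^\ast$. Splitting into two short exact sequences and applying $\Ext_R^\ast(-,R)$ expresses $\Ext_R^i(\tr(\syz M),R)$ in terms of $\Ext_R^{i+1}(\tr M,R)$ and $\Ext_R^\ast(\Ext_R^1(M,R),R)$. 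The split extension $0\to R\to R\oplus M\to M\to 0$ already places $M\in\tf_n(R)$, which handles the $\tr M$ pieces in degrees $\le n$. For the contribution of $\Ext_R^1(M,R)$ I would realize a generating set by a single nontrivial extension $0\to R^a\to X\to M\to 0$; the hypothesis $X\in\tf_n(R)$, fed through the analogous 4-term sequence for $X$, forces $\Ext_R^i(\tr(\syz M),R)=0$ in the range $1\le i\le n+1$.

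Parts (2) and (3) are then formal. For (2), $\add R\subseteq\tf_n(R)$ combined with extension-closure yields $(\add R)\ast M\subseteq\tf_n(R)$ for every $M\in\tf_n(R)$, and (1) applies. For (3), $\tf_n(R)$ trivially contains $R$ and is always closed under direct summands (as $\tr$ respects summands up to projectives and $\Ext$ preserves sums), while syzygy-closure follows from (2) combined with the evident inclusion $\tf_{n+1}(R)\subseteq\tf_n(R)$. For (4), I would localize at $\p$ of height $n$: under $(\ss_n)$, $R_\p$ is Cohen--Macaulay of depth $n$. A grade computation in the spirit of the proof of Theorem \ref{tf-minimal}, using that every prime in $\supp\Ext_R^i(R/\p,R)$ contains $\p$ and hence has depth $\ge n$, yields $\grade_R\Ext_R^i(R/\p,R)\ge n$ for $i\ge 1$, so \cite[Proposition (2.26)]{AB} gives $\syz_R^{n+1}(R/\p)\in\tf_{n+1}(R)$; (2) then promotes this to $\syz_R^{n+2}(R/\p)\in\tf_{n+2}(R)$. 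Localizing at $\p$ yields $\syz_{R_\p}^{n+2}\kappa(\p)\in\tf_{n+2}(R_\p)$, and Theorem \ref{syz}(3) applied to the Cohen--Macaulay local ring $R_\p$ forces it to be Gorenstein. The $(\g_n)$ conclusion under $(\ss_{n+1})$ follows by applying the first assertion locally at each prime of height $h\le n$, after reducing to the height-$n$ case via a straightforward descent using that $\tf_{n+2}(R_\p)\subseteq\tf_{h+2}(R_\p)$ for $h\le n$.

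For (5), $\gp(R)\subseteq\tf_n(R)$ is automatic. In (a), syzygy-closure of $\tf_n(R)$ gives $\syz^j M\in\tf_n(R)\subseteq\tf_{t+2}(R)$ for every $j\ge 0$; in particular each $\syz^j M$ is a $(t+2)$-syzygy. Realizing $\syz^{j+1}M\approx\syz^{t+2}Y_j$ and invoking the grade estimate \cite[Corollary (4.18)]{AB} forces $\grade_R\Ext_R^{t+2}(Y_j,R)\ge t+1>\depth R$, hence $\Ext_R^{t+2}(Y_j,R)=0$; the dimension shift identifies this with $\Ext_R^1(\syz^j M,R)=0$, and varying $j$ covers all $\Ext_R^i(M,R)=0$ for $i\ge 1$. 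With $\Ext^1$-vanishing in place for every syzygy of $M$, the 4-term sequence of (1) collapses to a short exact sequence $0\to\tr(\syz^j M)\to F_2^\ast\to\tr(\syz^{j+1}M)\to 0$, and iterating it transfers the $\tr$-side vanishing for $\syz^j M\in\tf_n(R)$ (valid for $i\le n$ and every $j$) into $\Ext_R^i(\tr M,R)=0$ for every $i\ge 1$. Hence $M\in\gp(R)$. For (b), (3) converts extension-closure into syzygy-closure, so (a) applies directly when $n\ge t+2$; for $n=t+1$, a second application of (2) shows that $\tf_{t+2}(R)$ is itself closed under syzygies, so (a) gives $\tf_{t+2}(R)=\gp(R)$, and for $M\in\tf_{t+1}(R)$ the inclusion $\syz M\in\gp(R)$ together with $\Ext_R^1(M,R)=0$ coming from the 4-term sequence at $n=t+1$ yields $M\in\gp(R)$. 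The equivalence with Gorensteinness at $n=t+1$ is Theorem \ref{1-2}. The technical crux throughout (5)(a) is the simultaneous control of $\Ext^1$-vanishing and the $\tr$-side propagation, both of which rest on the Auslander--Bridger grade machinery.
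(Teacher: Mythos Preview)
Your approaches to (1)--(3) and the first assertion of (4) are sound and close to the paper's; for (1) you build an extension killing $\Ext^1_R(M,R)$ by hand, while the paper uses the canonical one $0\to P\to\tr\syz\tr\syz M\to M\to 0$ from \cite[Proposition~(2.21)]{AB}, but both arrive at some $X\in\G_{1,n}$ with $\syz X\approx\syz M$, after which $\syz\G_{1,n}\subseteq\tf_{n+1}(R)$ finishes.

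The genuine gap is in (5)(a). In the first half, your dimension shift ``$\Ext^{t+2}_R(Y_j,R)=\Ext^1_R(\syz^j M,R)$'' requires $\syz^{t+1}Y_j\approx\syz^j M$, which does not follow from $\syz^{t+2}Y_j\approx\syz^{j+1}M$ alone (syzygies are not cancellable); this is repairable by first writing $\syz^j M\approx\syz^{t+2}Z_j$ and then taking $Y_j=\syz Z_j$. More seriously, in the second half your iteration of the sequences $0\to\tr\syz^j M\to F^\ast\to\tr\syz^{j+1}M\to 0$ yields $\Ext^i_R(\tr M,R)\cong\Ext^{i+j}_R(\tr\syz^j M,R)$, and the hypothesis $\syz^j M\in\tf_n(R)$ kills this only when $i+j\le n$; for $i>n$ no choice of $j\ge 0$ helps, so you recover nothing beyond $M\in\tf_n(R)$. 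The paper instead observes that once $M\in\G_{\infty,n}$ one has $\tr M\in\G_{n,\infty}\subseteq\tf_n(R)$, and reruns the \emph{same} grade argument on $\tr M$ (legitimate since $\tf_n(R)$ is syzygy-closed), obtaining $\tr M\in\G_{\infty,0}$ and hence $M\in\gp(R)$. Two smaller issues: your ``apply locally'' step for the second assertion of (4) presumes extension-closure of $\tf_{n+1}(R_\p)$, which is not given (the paper invokes \cite[Theorem~4.1]{syzext} instead); and citing Theorem~\ref{1-2} in (5)(b) for the Gorenstein equivalence is circular, since that theorem rests on this proposition---the paper argues directly via Theorem~\ref{syz}(1) and (3).
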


\begin{proof}
(1) By \cite[Proposition (2.21)]{AB} and its proof, there is an exact sequence $0\to P\to N\to M\to 0$ with $P\in\add R$, where $N=\tr\syz\tr\syz M$.
We have $N\in(\add R)\ast M\subseteq \tf_{n}(R)$, and hence $N\in\tf_n(R)\cap\G_{1,0}=\G_{1,n}$.
By \cite[Proposition 1.1.1]{I} and \cite[Theorem 2.17]{AB} we get $\syz M\approx\syz N\in \syz  \G_{1,n}(R) \subseteq \tf_{n+1}(R)$.

(2) The assertion immediately follows from (1) as we have $(\add R)\ast\tf_n(R)\subseteq \tf_n(R)$ by assumption.

(3) The assertion is a direct consequence of (2) together with the general fact $\tf_{n+1}(R)\subseteq\tf_n(R)$.

(4) The second assertion follows by the first and \cite[Theorem 4.1]{syzext}.
To show the first assertion, let $\p\in\spec R$ with $\height\p=n$, $i\ge0$ and $M=\Ext_R^i(R/\p,R)$.
Each $\q\in\supp M$ contains $\p$, so $\height\q\ge\height\p=n$.
As $R$ satisfies $(\ss_n)$, we get $\depth R_\q\ge n$.
By \cite[Proposition 1.2.10(a)]{BH} we have $\grade M\ge n$.
It follows from \cite[Proposition (2.26)]{AB} that $\syz^{n+1}(R/\p)\in\tf_{n+1}(R)$.
We see from (2) that $\syz\tf_{n+1}(R)\subseteq\tf_{n+2}(R)$, and hence $\syz^{n+2}(R/\p)\in\tf_{n+2}(R)$, which induces $\syz^{n+2}\kappa(\p)\in\tf_{n+2}(R_\p)$.
Using again the assumption that $R$ satisfies $(\ss_n)$, we have $\depth R_\p=n$.
It follows from Theorem \ref{syz}(3) that $R_\p$ is Gorenstein.

(5a) Fix $M\in\tf_{n}(R)$.
By assumption $\syz M\in\tf_{n}(R)$ i.e. $\Ext_R^i(\tr\syz M,R)=0$ for all $1\le i\le n$
By \cite[Theorem (2.8)]{AB} there is an exact sequence $0=\Tor_1^R(M,R)\to(\Ext_R^1(M,R))^\ast\to\Ext_R^2(\tr\syz M,R)=0$.
Hence $(\Ext_R^1(M,R))^\ast=0$.
By \cite[Proposition (2.6)]{AB}, there exists an exact sequence $0\to\Ext_R^1(M,R)\to\tr M\to\syz\tr\syz M\to0$, which induces an exact sequence $\Ext_R^i(\tr M,R)\to\Ext_R^i(\Ext_R^1(M,R),R)\to \Ext^{i+1}_R(\syz \tr \syz M,R)=\Ext_R^{i+2}(\tr\syz M,R)$ for every $i\ge0$.
This implies $\Ext_R^i(\Ext_R^1(M,R),R)=0$ for all $1\le i\le n-2$.
Thus $\Ext_R^i(\Ext_R^1(M,R),R)=0$ for all $0\le i\le n-2$, that is, $\grade\Ext_R^1(M,R)\ge n-1>t$.
As $\depth R=t$, we must have $\Ext^1_R(M,R)=0$.
Replacing $M$ by $\syz^iM$ for $i\ge0$, we get $\Ext_R^{>0}(M,R)=0$ i.e. $M\in\G_{\infty,0}$.
So, $\tr M\in\G_{0,\infty}\subseteq\tf_{n}(R)$.
By what we have seen, $\tr M\in\G_{\infty,0}$.
Thus $M\in\gp(R)$.

(5b) Using (2), we observe that $\syz\tf_n(R)\subseteq\tf_{n+1}(R)\subseteq\tf_n(R)$.
The case $n \ge t+2$ follows from (5a).
Now we consider the case $n=t+1$.
Theorem \ref{syz}(1) yields that $\syz^nk\in \tf_n(R)$, which implies that $\syz^{n+1}k\in\syz\tf_n(R)\subseteq\tf_{n+1}(R)$.
Theorem \ref{syz}(3) implies that $R$ is Gorenstein and thus $\tf_n(R)=\gp(R)$.
Therefore, $R$ is a Gorenstein ring if (and only if) the subcategory $\tf_{t+1}(R)$ is closed under extensions.
\end{proof} 

\begin{rem}
\begin{enumerate}[(1)]
\item
Proposition \ref{tf-syz}(1) for $n=1$ recovers \cite[Corollary 2.2]{ho}.
Indeed, it says if $\tf_{1}(R)=\Syz_1(R)$ is closed under extensions, then $\syz M \in \tf_{2}(R)=\refl R$ for $M\in \tf_{1}(R)$.
As $\syz^2 \tr M\approx M^\ast$, we have that if $\Syz_1(R)$ is closed under extensions, then $M^\ast\in\refl R$ for every $M\in\mod R$.
\item
The inequality $\dim R\ge n$ in Proposition \ref{tf-syz}(4) is sharp.
Indeed, if $(R,\m)$ is a non-Gorenstein local ring with $\m^2=0$, then $\tf_2(R)=\proj R$ (see Lemma \ref{a}) is closed under extensions.
\item
Proposition \ref{tf-syz}(4) refines \cite[Theorem 2.3(3)$\Rightarrow$(2)]{MTT} in some cases.
\end{enumerate}
\end{rem}

Next we show the proposition below, which can also be of some independent interest. 
  
\begin{prop}\label{ext-local}
Let $(R,\m,k)$ be a local ring of depth $t$.
Let $a\in \{t,t+1 \}$ and $n\ge a$.
\begin{enumerate}[\rm(1)]
\item
Let $K\in\mod_0R$.
If $\Ext_R^{a+1}(\tr M,R)=0$ for all $M\in R\ast \syz^nK$, then $\Ext_R^{n+1}(K,R)=0$.
\item
If $\Ext_R^{a+1}(\tr M,R)=0$ for every $M\in R\ast \syz^n k$, then the local ring $R$ is Gorenstein. 
\end{enumerate}
\end{prop}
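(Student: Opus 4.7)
The plan is to deduce part (2) from part (1) via Roberts' theorem, and to prove part (1) by induction on $n$, with the inductive step given by a syzygy shift that reduces everything to the base case $n=a$.

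For part (2), I would apply part (1) to $K=k$, which lies trivially in $\mod_0R$. The hypothesis of part (2) is then precisely the hypothesis of part (1) for this choice of $K$, so part (1) yields $\Ext_R^{n+1}(k,R)=0$. Since $n+1\ge a+1\ge t+1>t=\depth R$, the theorem of Roberts \cite[II. Theorem 2]{roberts} forces $\id_RR<n+1$, whence $R$ is Gorenstein.

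For part (1), I proceed by induction on $n\ge a$. For the inductive step, note that $\syz^{n+1}K=\syz^n(\syz K)$ and that $\syz K\in\mod_0R$ whenever $K\in\mod_0R$; thus the hypothesis for the pair $(K,n+1)$ coincides with the hypothesis for $(\syz K,n)$, and applying the induction hypothesis to $\syz K$ yields $\Ext_R^{n+1}(\syz K,R)=0$, which by the standard syzygy shift $\Ext_R^{n+1}(\syz K,R)\cong\Ext_R^{n+2}(K,R)$ delivers the conclusion for $(K,n+1)$. This reduces the problem to the base case $n=a$.

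For the base case, I argue by contrapositive. Assume $\Ext_R^{a+1}(K,R)\cong\Ext_R^1(\syz^aK,R)$ is nonzero. Since $K\in\mod_0R$, this $\Ext$-module has finite length, so I pick a nonzero socle element $\xi$ with $\m\xi=0$. Via the Auslander--Bridger inclusion $\Ext_R^1(\syz^aK,R)\hookrightarrow\tr\syz^aK$ (arising from the dualised free presentation), I view $\xi\in\tr\syz^aK$ with $R\xi\cong k$. The horseshoe-lemma construction of the extension $M_\xi\in R\ast\syz^aK$ classified by $\xi$ gives $\tr M_\xi\cong\tr\syz^aK/R\xi$, yielding the short exact sequence
\[
0\to k\to\tr\syz^aK\to\tr M_\xi\to 0.
\]
Applying $\Hom_R(-,R)$ and invoking the hypothesis both at the split extension $R\oplus\syz^aK$ (where $\tr(R\oplus\syz^aK)\approx\tr\syz^aK$) and at $M_\xi$, I obtain $\Ext_R^{a+1}(\tr\syz^aK,R)=0=\Ext_R^{a+1}(\tr M_\xi,R)$. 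I would then exploit the associated long exact sequence, together with the Auslander--Bridger identities $\Ext_R^{i+2}(\tr N,R)\cong\Ext_R^i(N^*,R)$ for $i\ge1$ and the fundamental four-term sequence $0\to\Ext_R^1(\tr N,R)\to N\to N^{**}\to\Ext_R^2(\tr N,R)\to0$ applied to $N=\syz^aK$, combined with the grade vanishing $\Ext_R^{<t}(L,R)=0$ for any finite-length $L$, to produce a nonzero $\Ext$-group incompatible with one of the two vanishings above.

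The hard part will be handling the cases $a\in\{t,t+1\}$ uniformly, since the dimension-shift identity degenerates in low depth $t\in\{0,1\}$. The easy extreme is $a=0$ (which forces $t=0$): the hypothesis $\Ext_R^1(\tr M_\xi,R)=0$ simply says $M_\xi$ is torsionless, and combined with the inclusion $R\hookrightarrow M_\xi\hookrightarrow F$ into a free module this splits off $R$ as a summand of $F$ and hence of $M_\xi$, forcing $\xi=0$ directly. The remaining cases demand a more careful tracking of the connecting maps in the long exact sequence, together with delicate use of the finite-length hypothesis on $\Ext_R^{a+1}(K,R)$ and of the fact that $\r(R)\ne0$ produces nonvanishing of $\Ext_R^t(k,R)$.
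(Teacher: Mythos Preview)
Your reduction of (2) to (1) via Roberts' theorem matches the paper, and your induction on $n$ reducing (1) to the base case $n=a$ is valid (the paper argues directly for arbitrary $n\ge a$, but this is just a reorganization). Your construction of the socle extension $M_\xi$ and the sequence $0\to k\to\tr\syz^aK\to\tr M_\xi\to 0$ agrees with the paper's $0\to k\to A\to B\to 0$ (obtained there by an explicit diagram chase). Your $a=0$ argument is correct and in fact slicker than the paper's: since $\depth R=0$, any injection $R\hookrightarrow F$ into a free module splits, so torsionlessness of $M_\xi$ forces the extension to split.

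The gap is in your plan for $a\ge1$. You propose to apply the hypothesis at both the split extension $R\oplus\syz^aK$ and at $M_\xi$, obtaining $\Ext_R^{a+1}(\tr\syz^aK,R)=0=\Ext_R^{a+1}(\tr M_\xi,R)$. But two vanishings at level $a+1$ give nothing in the long exact sequence of $0\to k\to\tr\syz^aK\to\tr M_\xi\to 0$ about $\Ext_R^a(k,R)$, which is the term you must kill. The additional tools you list (the identities $\Ext_R^{i+2}(\tr N,R)\cong\Ext_R^i(N^\ast,R)$, the four-term sequence) do not bridge this level shift either. What the paper does, and what you are missing, is to get a vanishing one degree \emph{lower}: it observes that $\syz^aK\in\tf_a(R)$. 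Indeed, since $K\in\mod_0R$ each $\Ext_R^i(K,R)$ has finite length and hence grade at least $t\ge a-1\ge i-1$ for $1\le i\le a$, so \cite[Proposition (2.26)]{AB} applies; this is precisely Theorem~\ref{syz}(1). Thus $\Ext_R^a(\tr\syz^aK,R)=0$, and the long exact sequence now reads
\[
0=\Ext_R^a(\tr\syz^aK,R)\to\Ext_R^a(k,R)\to\Ext_R^{a+1}(\tr M_\xi,R)=0,
\]
forcing $\Ext_R^a(k,R)=0$. This immediately contradicts $a=t$, and for $a=t+1$ Roberts' theorem gives that $R$ is Gorenstein, whence the conclusion is automatic. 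Your ``grade vanishing'' ingredient is exactly what is needed, but it must be fed into the torsionfreeness of $\syz^aK$ to produce a level-$a$ vanishing, rather than applied to $k$ in the long exact sequence; the hypothesis is only needed once, at $M_\xi$.
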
 

\begin{proof}
(1) As $n\ge t$, the conclusion is clear if $R$ is Gorenstein.
Let $R$ be non-Gorenstein.
Put $L=\syz^nK$.
Assuming $\Ext^{n+1}_R(K,R)=\Ext^1_R(L,R)\ne 0$, we work towards a contradiction.
The choice of $K$ implies that $\Ext^1_R(L,R)$ has finite length.
Choose a socle element $0\ne\sigma\in\Ext^1_R(L,R)$.
Let $0\to R\to N\to L\to 0$ be the exact sequence corresponding to $\sigma$.
Then $N$ is in $R\ast L$, and $\Ext_R^{a+1}(\tr N,R)=0$ by assumption.
Using the snake lemma, we get the following commutative diagrams with exact rows and columns.
$$
\xymatrix@R-1pc@C-1pc{
0\ar[r]&0\ar[r]\ar[d]&R^{\oplus b}\ar[r]\ar[d]&R^{\oplus b}\ar[r]\ar[d]&0\\
0\ar[r]&R\ar[r]\ar[d]&R^{\oplus c+1}\ar[r]\ar[d]&R^{\oplus c}\ar[r]\ar[d]&0\\
0\ar[r]&R\ar[r]\ar[d]&N\ar[r]\ar[d]&L\ar[r]\ar[d]&0\\
&0&0&0
}\quad
\xymatrix@R-1pc@C-1pc{
&0\ar[d]&0\ar[d]&0\ar[d]\\
0\ar[r]& L^\ast\ar[r]\ar[d]& N^\ast\ar[r]^f\ar[d]& R\ar[d]\ar[llddd]\\
0\ar[r]& R^{\oplus c}\ar[r]\ar[d]& R^{\oplus c+1}\ar[r]\ar[d]& R\ar[r]\ar[d]& 0\\
0\ar[r]& R^{\oplus b}\ar[r]\ar[d]& R^{\oplus b}\ar[r]\ar[d]& 0\ar[r]\ar[d]& 0\\
&A\ar[r]\ar[d]& B\ar[r]\ar[d]& 0\ar[r]\ar[d]& 0\\
&0&0&0
}\quad
\xymatrix@R-1pc@C-1pc{
& 0\ar[d]& 0\ar[d]& 0\ar[d]\\
0\ar[r]& 0\ar[r]\ar[d]& B^\ast\ar[r]^l\ar[d]& A^\ast\ar[d]\ar[llddd]^(.4)\delta\\
0\ar[r]& 0\ar[r]\ar[d]& R^{\oplus b}\ar[r]\ar[d]& R^{\oplus b}\ar[r]\ar[d]& 0\\
0\ar[r]& R\ar[r]\ar[d]& R^{\oplus c+1}\ar[r]\ar[d]& R^{\oplus c}\ar[r]\ar[d]& 0\\
& R\ar[r]_h\ar[d]& N\ar[r]\ar[d]& L\ar[r]\ar[d]& 0\\
&0&0&0
}
$$
Note that $A\approx\tr\syz^nK$ and $B\approx\tr N$.
There is also an exact sequence $N^\ast\xrightarrow{f}R\xrightarrow{g}\Ext_R^1(\syz^nK,R)$ with $g(1)=\sigma$.
It is seen that $\cokernel f\cong k$.
We get exact sequences $0\to k\to A\to B\to0$, and
\begin{equation}\label{r2}
\Ext_R^a(B,R)\to\Ext_R^a(A,R)\to\Ext_R^a(k,R)\to\Ext_R^{a+1}(B,R)=\Ext_R^{a+1}(\tr N,R)=0.
\end{equation}
Using Theorem \ref{syz}(1) and the general fact that $\tf_{i+1}(R)\subseteq\tf_i(R)$ for $i\ge0$, we obtain $\syz^n K\in\tf_a(R)$.
Suppose $a\ge1$.
Then $\Ext_R^a(A,R)=\Ext^a_R(\tr \syz^n K,R)=0$, and it follows from \eqref{r2} that $\Ext^a_R(k,R)=0$.
As $a\in\{t,t+1\}$, we must have $a=t+1$.
By \cite[II. Theorem 2]{roberts} the ring $R$ is Gorenstein, and we get a desired contradiction.
Thus we may assume $a=0$, and then $t=0$.
The slanted arrow $\delta:A^\ast\to R$ in the third diagram above is a zero map by the injectivity of the map $h$.
Hence the map $l$ is surjective (and hence an isomorphism).
From \eqref{r2} we get $k^\ast=0$, which is a contradiction to the fact that $t=0$.

(2) By (1) we have $\Ext_R^{n+1}(k,R)=0$.
Since $n\ge t$, the ring $R$ is Gorenstein by \cite[II. Theorem 2]{roberts}.
\end{proof}   

Applying the above proposition, we can prove the following theorem.

\begin{thm}\label{ext-local-thm}
Let $(R,\m,k)$ be local with depth $t$.
Then $R$ is Gorenstein if and only if $\Syz_m(R)$ is closed under extensions for some integer $m>t$, if and only if $R\ast \syz^n k\subseteq \Syz_{t+1}(R)$ for some integer $n\ge t$.
\end{thm}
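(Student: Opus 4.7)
The plan is to prove the three conditions equivalent via the cycle Gorenstein $\Rightarrow$ (B) $\Rightarrow$ (C) $\Rightarrow$ Gorenstein, where (B) is the statement that $\Syz_m(R)$ is closed under extensions for some $m>t$ and (C) is $R\ast\syz^n k\subseteq\Syz_{t+1}(R)$ for some $n\ge t$.

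The first two implications will be formal. If $R$ is Gorenstein, then $R$ is Cohen--Macaulay with $\dim R=t$ and $\Syz_m(R)=\cm(R)$ for every $m\ge t$: the depth lemma gives $\Syz_m(R)\subseteq\cm(R)$, while every MCM module over a Gorenstein ring is totally reflexive and hence an arbitrary syzygy. Since $\cm(R)$ is closed under extensions by the depth lemma, (B) follows for any $m>t$. For (B)$\Rightarrow$(C), I would take $n=m$; since $R$ is projective, $R\in\Syz_m(R)$, and of course $\syz^nk=\syz^mk\in\Syz_m(R)$, so closedness under extensions gives $R\ast\syz^n k\subseteq\Syz_m(R)\subseteq\Syz_{t+1}(R)$, with $n=m>t$.

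The heart of the argument is (C)$\Rightarrow$Gorenstein, which I plan to derive from Proposition \ref{ext-local}(2) applied with the choice $a=t$. Fix $M\in R\ast\syz^n k$; the goal is to verify $\Ext_R^{t+1}(\tr M,R)=0$. First I would check $M\in\mod_0 R$: for any $\p\ne\m$, a minimal free resolution $F_\bullet\to k$ localized at $\p$ becomes a bounded acyclic complex of free $R_\p$-modules (since $k_\p=0$), which splits from the right end into split short exact sequences, making each $(\syz^ik)_\p$ a direct summand of a free module and hence $R_\p$-free. Since $R$ and $\syz^n k$ are thus both locally free on the punctured spectrum, so is $M$ (the extension sequence $0\to R_\p\to M_\p\to(\syz^nk)_\p\to 0$ splits). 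By hypothesis $M\in\Syz_{t+1}(R)$, so Theorem \ref{syz}(1) applied with its parameter equal to $t+1$ (which yields $\min\{t+1,t+1\}=t+1$) places $M$ in $\tf_{t+1}(R)$; this is precisely the vanishing $\Ext_R^{t+1}(\tr M,R)=0$. Proposition \ref{ext-local}(2) with $a=t$ and the given $n\ge t=a$ then concludes that $R$ is Gorenstein.

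The main obstacle is essentially bookkeeping: there are two distinct roles of ``$n$'' in play (the one in Theorem \ref{syz}(1) versus the one in the present statement), and the invocation of Proposition \ref{ext-local}(2) must match the correct value of $a$. The truly substantive input is the verification $R\ast\syz^n k\subseteq\mod_0 R$, which is what allows us to upgrade $(t+1)$-syzygy membership to $(t+1)$-torsionfree membership via Theorem \ref{syz}(1); without this step, the hypothesis of Proposition \ref{ext-local}(2) could not be confirmed.
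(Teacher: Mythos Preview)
Your proof is correct and follows essentially the same route as the paper: the paper also reduces (C)$\Rightarrow$Gorenstein to Proposition \ref{ext-local}(2) by using Theorem \ref{syz}(1) to upgrade $\Syz_{t+1}(R)\cap\mod_0R$ to $\tf_{t+1}(R)$. The one minor difference is in (B)$\Rightarrow$(C): the paper takes $n=m-1$ and invokes Theorem \ref{syz}(2) to place $\syz^{m-1}k$ in $\Syz_m(R)$, whereas you take $n=m$ and use the trivial containment $\syz^mk\in\Syz_m(R)$, which is a bit cleaner and avoids an extra citation.
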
 
 
\begin{proof}
First, suppose that $\Syz_m(R)$ is closed under extensions for some integer $m>t$.
Then $m-1\ge t$ and $\syz^{m-1}k\in\Syz_m(R)$ by Theorem \ref{syz}(2).
We obtain $R\ast\syz^{m-1}k\in\Syz_m(R)\subseteq\Syz_{t+1}(R)$.
Next, suppose that $R\ast\syz^nk\subseteq\Syz_{t+1}(R)$ for some $n\ge t$.
Then $R\ast\syz^nk\subseteq\Syz_{t+1}(R)\cap\mod_0R\subseteq\tf_{t+1}(R)$ by Theorem \ref{syz}(1).
Hence $\Ext_R^{t+1}(\tr M,R)=0$ for all $M\in R\ast\syz^nk$.
Proposition \ref{ext-local}(2) implies $R$ is Gorenstein.
\end{proof}   

To show our next result, we prepare a lemma to get a certain property of $\widetilde\s_n(R)$.

\begin{lem}\label{r3}
If $\syz \widetilde \s_n(R)$  is closed under extensions, then one has the equality $\syz \widetilde \s_n(R)=\widetilde \s_{n+1}(R)$.
\end{lem}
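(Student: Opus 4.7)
The plan is to prove two inclusions separately, both of which should follow cleanly from results already at our disposal.

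For the inclusion $\syz\widetilde\s_n(R)\subseteq\widetilde\s_{n+1}(R)$, which does not even need the hypothesis, I would simply apply the depth lemma prime by prime. Given $N\in\widetilde\s_n(R)$ and an exact sequence $0\to M\to P\to N\to 0$ with $P\in\proj R$, localize at an arbitrary $\p\in\spec R$ and use $\depth N_\p\ge\min\{n,\depth R_\p\}$ together with $\depth P_\p=\depth R_\p$ to deduce $\depth M_\p\ge\min\{n+1,\depth R_\p\}$, which is precisely the $(\widetilde\ss_{n+1})$ condition.

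For the reverse inclusion $\widetilde\s_{n+1}(R)\subseteq\syz\widetilde\s_n(R)$, I would invoke Proposition \ref{3-2}(1) to rewrite the target as $\widetilde\s_{n+1}(R)=\ext\Syz_{n+1}(R)$. Since $\Syz_{n+1}(R)=\syz\Syz_n(R)\subseteq\syz\widetilde\s_n(R)$, it suffices to verify that $\syz\widetilde\s_n(R)$ is closed under both direct summands and extensions. The former is exactly Lemma \ref{3-1}(2), which applies because $\widetilde\s_n(R)$ is resolving (as recorded in the preliminaries), while the latter is the standing hypothesis. Taking the extension closure of $\Syz_{n+1}(R)$ inside the summand- and extension-closed subcategory $\syz\widetilde\s_n(R)$ then yields the desired containment.

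I do not expect a genuine obstacle here: the statement is essentially an assembly of Proposition \ref{3-2}(1), Lemma \ref{3-1}(2), and the depth lemma, with the hypothesis supplying the one missing closure property. The only subtlety worth flagging is making sure the identification $\Syz_{n+1}(R)=\syz\Syz_n(R)$ is used correctly (i.e., that every $(n+1)$-syzygy really does appear as a first syzygy of an $n$-syzygy, which is immediate from the definition).
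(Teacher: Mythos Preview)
Your proposal is correct and follows essentially the same route as the paper: both prove the easy inclusion via the depth lemma, then for the reverse inclusion use Lemma \ref{3-1}(2) together with the hypothesis to see that $\syz\widetilde\s_n(R)$ has the requisite closure properties, and invoke Proposition \ref{3-2}(1) applied to $\Syz_{n+1}(R)=\syz\Syz_n(R)$. The only cosmetic difference is that the paper takes the $\res$ description from Proposition \ref{3-2}(1) (noting that $\syz\widetilde\s_n(R)$ is in fact resolving, since Lemma \ref{3-1}(2) also supplies closedness under syzygies), whereas you use the $\ext$ description, which requires one fewer closure property to check.
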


\begin{proof}
We easy see that $\syz\widetilde\s_n(R)\subseteq\widetilde\s_{n+1}(R)$.
The subcategory $\widetilde \s_n(R)$ is resolving and contains $\Syz_n(R)$.
By assumption and by Lemma \ref{3-1}(2) the subcategory $\syz\widetilde\s_n(R)$ is resolving, and it contains $\syz\Syz_n(R)=\Syz_{n+1}(R)$.
Hence $\syz\widetilde\s_n(R)$ contains $\res\Syz_{n+1}(R)=\widetilde\s_{n+1}(R)$ by Proposition \ref{3-2}(1).
\end{proof}

The corollary below states several consequences of Theorem \ref{ext-local-thm}.
The first two assertions of the corollary give necessary and sufficient conditions for $R$ to be Gorenstein.
In \cite[Theorem 2.3(8)]{MTT} it is required that $R$ satisfies $(\ss_n)$ along with $\Syz_n(R)$ being closed under extensions.
In the third assertion of the corollary, we show some special cases where the condition of $(\ss_n)$ can be dropped.

\begin{cor}\label{gor-s_n}
Let $(R,\m,k)$ be a local ring of dimension $d$ and depth $t$.
\begin{enumerate}[\rm(1)]
\item
One has that $R$ is a Gorenstein local ring if and only if $\syz \widetilde \s_n(R)$ is closed under extensions for some consecutive $(t+1)$-many values of $n$.
\item
The ring $R$ is Gorenstein if and only if $R$ is Cohen--Macaulay and $\syz \cm(R)$ is closed under extensions.
\item
If $\Syz_n(R)$ is closed under extensions for some $n\ge \min \{d,t+1\}$, then $R$ is Cohen--Macaulay and $\tf_i(R)=\Syz_i(R)$ for all $1\le i\le n+1$.
\end{enumerate}
\end{cor}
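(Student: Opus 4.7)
The plan is to derive all three parts from Theorem \ref{ext-local-thm}, using Lemma \ref{r3} as the main bridge for (1) and (2).

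For (1), the forward direction uses that over a Gorenstein $R$ (so $d=t$) we have $\widetilde\s_n(R)=\cm(R)$ for every $n\ge d$, and $\cm(R)$ is a Frobenius category (every MCM module embeds in a free with MCM cokernel, via reflexivity over a Gorenstein ring), so $\syz\cm(R)=\cm(R)$ is resolving, hence extension-closed, for every such $n$. Conversely, suppose $\syz\widetilde\s_n(R)$ is closed under extensions for $n=n_0,n_0+1,\dots,n_0+t$. Iterating Lemma \ref{r3} yields
\[
\widetilde\s_{n_0+t+1}(R) \;=\; \syz^{t+1}\widetilde\s_{n_0}(R) \;\subseteq\; \Syz_{t+1}(R).
\]
Since $R$ and $M:=\syz^{n_0+t+1}k$ both lie in the resolving subcategory $\widetilde\s_{n_0+t+1}(R)$, we obtain $R\ast M\subseteq\Syz_{t+1}(R)$, and the second form of Theorem \ref{ext-local-thm} (with $n_0+t+1\ge t$) forces $R$ to be Gorenstein.

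Part (2) follows immediately from (1): the forward direction uses the same Frobenius argument, and for the converse, Cohen--Macaulayness gives $\widetilde\s_n(R)=\cm(R)$ for every $n\ge d$, so the hypothesis of (1) is met on the infinite range $n\ge d$.

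For (3), split on whether $n\ge t+1$ or $n=d$ with $n<t+1$. In the first case, Theorem \ref{ext-local-thm} at once yields $R$ Gorenstein, hence Cohen--Macaulay; since a Gorenstein local ring satisfies both $(\ss_j)$ and $(\g_j)$ for every $j$, the MTT equivalence used in the proof of Proposition \ref{3-2}(4) gives $\tf_i(R)=\Syz_i(R)$ for every $i\ge 1$, in particular for $1\le i\le n+1$. The other possibility, combined with $t\le d$, forces $n=t=d$, so $R$ is already Cohen--Macaulay; here the plan is to promote extension-closedness of $\Syz_d(R)$ to that of $\Syz_{d+1}(R)=\syz\Syz_d(R)$, thereby reducing to the first case (as $d+1>t$). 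Given $0\to L\to M\to N\to 0$ with $L,N\in\Syz_{d+1}(R)$, pick presentations $0\to L\to F_L\to L'\to 0$ and $0\to N\to F_N\to N'\to 0$ with $F_L,F_N$ free and $L',N'\in\Syz_d(R)$; push $L\hookrightarrow F_L$ out along $L\hookrightarrow M$ to get an intermediate $P$ fitting in $0\to F_L\to P\to N\to 0$ and $0\to M\to P\to L'\to 0$, which places $P\in\Syz_d(R)$ by extension-closedness; then a Schanuel-style comparison of the second sequence with the standard $0\to\syz L'\to F\to L'\to 0$ realizes $M$ (after allowed free-module adjustments) as a $(d+1)$-syzygy. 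The main obstacle will be completing this horseshoe/Schanuel step cleanly, since $P$ is not free and the resulting stable relation $M\oplus F\approx\syz L'\oplus P$ does not immediately yield $M\in\Syz_{d+1}(R)$ without careful syzygy-depth tracking.
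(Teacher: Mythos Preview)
Your arguments for (1) and (2) are essentially the paper's: iterate Lemma~\ref{r3} from a starting index to land $R\ast\syz^{n}k$ inside $\Syz_{t+1}(R)$ and invoke Theorem~\ref{ext-local-thm}; the Cohen--Macaulay case is then a specialization. No issues there.

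Part (3) has a genuine gap in the case $n=t=d$. Your plan is to promote extension-closedness of $\Syz_d(R)$ to $\Syz_{d+1}(R)$ and then apply Theorem~\ref{ext-local-thm} with $m=d+1>t$. But that theorem would force $R$ to be \emph{Gorenstein}, which is strictly stronger than what (3) asserts and is in fact false in general: take any non-Gorenstein artinian local ring, where $d=t=0$ and $\Syz_0(R)=\mod R$ is trivially extension-closed. So the implication ``$\Syz_d(R)$ extension-closed $\Rightarrow$ $\Syz_{d+1}(R)$ extension-closed'' cannot hold, and the Schanuel obstruction you flagged is not a technicality to be cleaned up but a symptom of this: from $M\oplus F\cong P\oplus\syz L'$ with $P$ merely in $\Syz_d(R)$ there is no depth argument available, since every module in sight already has maximal depth $d$.

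The paper avoids this entirely. Once $R$ is Cohen--Macaulay it satisfies $(\ss_n)$, and then \cite[Theorem~2.3(8)$\Rightarrow$(6)]{MTT} converts extension-closedness of $\Syz_n(R)$ directly into $\tf_{n+1}(R)=\Syz_{n+1}(R)$; the remaining equalities $\tf_i(R)=\Syz_i(R)$ for $1\le i\le n$ then follow from \cite[Corollary~(4.18)]{AB}. You should replace your bootstrap attempt with this citation.
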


\begin{proof}
(1) If $R$ is Gorenstein, then for all $n\ge d$ one has $\widetilde\s_n(R)=\cm(R)$ and $\syz\widetilde\s_n(R)=\cm(R)$, and $\cm(R)$ is closed under extensions.
This shows the ``only if'' part.
From now on we prove the ``if'' part.
There is an integer $l\ge0$ such that $\syz\widetilde\s_n(R)$ is closed under extensions for $l\le n\le l+t$.
We have $\syz^lk\in\widetilde\s_l(R)$, so that $\syz^{t+1+l}k\in\syz^{t+1}\widetilde\s_l(R)=\widetilde\s_{t+1+l}(R)$ by Lemma \ref{r3}.
Since $\widetilde\s_{t+1+l}(R)$ is resolving, this implies $R\ast\syz^{t+1+l}k\subseteq\widetilde\s_{t+1+l}(R)=\syz^{t+1}\widetilde\s_l(R)\subseteq\Syz_{t+1}(R)$.
Theorem \ref{ext-local-thm} implies that $R$ is Gorenstein.

(2) If $R$ is Cohen--Macaulay, then $\widetilde\s_n(R)=\cm(R)$ for all $n\ge d$.
If $R$ is Gorenstein, then $\syz\cm(R)=\cm(R)$.
The assertion now follows from (1).

(3) If $n\ge t+1$, then $R$ is Gorenstein by Theorem \ref{ext-local-thm} and $\tf_i(R)=\Syz_i(R)$ for all $i\ge0$ by \cite[Corollary (4.22)]{AB}.
Assume $n\ge d$ and $R$ is not Cohen--Macaulay.
Then $n\ge d\ge t+1$ and Theorem \ref{ext-local-thm} implies $R$ is Gorenstein, which contradicts the assumption that $R$ is not Cohen--Macaulay.
Thus $R$ must be Cohen--Macaulay, hence satisfies $(\ss_n)$.
By \cite[Theorem 2.3(8)$\Rightarrow$(6)]{MTT} we get $\tf_{n+1}(R)=\Syz_{n+1}(R)$, and finally by \cite[Corollary 4.18]{AB} we obtain $\tf_i(R)=\Syz_i(R)$ for all $1\le i\le n+1$.
\end{proof}

\begin{ques}
Let $R$ be a local ring of depth $t$ such that $\Syz_t(R)$ is closed under extensions.
Then, is $\tf_t(R)$ also closed under extensions, or at least closed under syzygies?
(Note that Proposition \ref{tf-syz}(3) implies that if $\tf_t(R)$ is closed under extensions, then it is closed under syzygies.)
\end{ques}

Here we record the following observation on $\G_{a,b}$.
We should compare it with Proposition \ref{tf-syz}(5a).

\begin{prop}\label{7-5}
\begin{enumerate}[\rm(1)]
\item
Let $0<a<\infty$ and $0\le b\le\infty$.
Then the subcategory $\G_{a,b}$ is closed under syzygies if and only if there is an equality $\G_{a,b}=\gp(R)$.
\item
Suppose that $(R,\m,k)$ is a local ring of depth $t$.
Let $n\in\ZZ_{>0}\cup\{\infty\}$ be such that $\G_{n,0}$ is closed under syzygies.
Then one has $n\ge t$.
If one also has $n\le t+1$, then $R$ is a Gorenstein local ring.
\end{enumerate}
\end{prop}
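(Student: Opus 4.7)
The plan is to handle (1) and (2) separately, with (1) providing the essential technical engine that (2) then exploits. For (1), the backward direction is immediate: $\gp(R)$ is well known to be closed under syzygies. For the forward direction I will fix $M \in \G_{a,b}$ and argue in two stages. The first stage is a straightforward iterated syzygy-shift: the hypothesis gives $\syz^j M \in \G_{a,b}$ for every $j \ge 0$, whence $\Ext_R^{i+j}(M,R) = \Ext_R^i(\syz^j M, R) = 0$ for $1 \le i \le a$ and every $j \ge 0$; since $a \ge 1$, this sweeps out every positive degree, yielding $M \in \G_{\infty,0}$. The second stage upgrades this to $\Ext_R^j(\tr M, R) = 0$ for every $j \ge 1$: with $\Ext_R^1(M, R) = 0$ in hand, the Auslander--Bridger sequence \cite[Proposition (2.6)]{AB} collapses to $\tr M \approx \syz \tr \syz M$, and iterating (using that each $\syz^i M$ also lies in $\G_{\infty, 0}$) gives $\tr M \approx \syz^i \tr \syz^i M$ for every $i \ge 0$. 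Feeding this into the long exact Ext sequence attached to the AB sequence applied to each $\syz^i M$, together with the vanishings $\Ext_R^k(\tr \syz^i M, R) = 0$ for $1 \le k \le b$ coming from $\syz^i M \in \G_{a, b} \subseteq \G_{0, b}$, I will perform the same style of bookkeeping as in the proof of Proposition \ref{tf-syz}(5a) to squeeze out $\Ext_R^j(\tr M, R) = 0$ in all positive degrees, and hence $M \in \gp(R)$.

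For (2), the lower bound $n \ge t$ is a short contradiction argument. Suppose $n < t$; then $\Ext_R^i(k, R) = 0$ for $1 \le i \le n$ (because $n < t = \depth R$), so $k \in \G_{n, 0}$. Closure under syzygies gives $\syz^j k \in \G_{n, 0}$ for every $j \ge 0$, hence $\Ext_R^{i+j}(k, R) = 0$ for all $1 \le i \le n$ and $j \ge 0$; letting $j$ vary covers every degree $m \ge 1$, contradicting the standard fact $\Ext_R^t(k, R) \ne 0$.

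For the Gorenstein conclusion under the stronger bound $n \le t + 1$, I will invoke (1) to obtain $\G_{n, 0} = \gp(R)$ and then exhibit a totally reflexive syzygy of $k$. By Theorem \ref{syz}(2), $\syz^t k$ is $(t+1)$-torsionfree, i.e.\ $\syz^t k \in \G_{0, t+1}$, and the transpose duality $X \in \G_{a, b} \Leftrightarrow \tr X \in \G_{b, a}$ (coming from $\tr\tr X \approx X$) rephrases this as $\tr \syz^t k \in \G_{t+1, 0}$. Since $n \le t + 1$, the inclusion $\G_{t+1, 0} \subseteq \G_{n, 0} = \gp(R)$ gives $\tr \syz^t k \in \gp(R)$; since $\gp(R) = \G_{\infty, \infty}$ is stable under transpose (up to projective summands), we also have $\syz^t k \approx \tr \tr \syz^t k \in \gp(R)$. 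Thus the residue field $k$ has finite Gorenstein-projective dimension (at most $t$), and by the classical Auslander--Bridger characterization $R$ is Gorenstein.

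The main obstacle is the second stage of the forward implication in (1): the naive Ext-shift $\Ext_R^j(\tr M, R) = \Ext_R^{j+i}(\tr \syz^i M, R)$ only recovers vanishings already guaranteed by $M \in \G_{0, b}$, so the missing vanishings must be extracted from the long exact Ext sequence attached to $0 \to \Ext_R^1(M, R) \to \tr M \to \syz \tr \syz M \to 0$ applied iteratively across the syzygies of $M$, along the same delicate lines as in the proof of Proposition \ref{tf-syz}(5a).
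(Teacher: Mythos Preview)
Your argument for part (2) is correct and essentially matches the paper's, differing only cosmetically: you use $\syz^t k\in\tf_{t+1}(R)$ from Theorem \ref{syz}(2) where the paper uses $\syz^{t+1}k\in\tf_{t+1}(R)$ from Theorem \ref{syz}(1), but either feeds the same way into (1).

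The gap is in stage~2 of (1). Once stage~1 gives $\Ext_R^1(M,R)=0$, the Auslander--Bridger sequence $0\to\Ext_R^1(M,R)\to\tr M\to\syz\tr\syz M\to0$ collapses to the isomorphism $\tr M\approx\syz\tr\syz M$, and the associated long exact Ext sequence carries no information beyond the degree shift $\Ext_R^j(\tr M,R)\cong\Ext_R^{j+1}(\tr\syz M,R)$. Iterating yields $\Ext_R^j(\tr M,R)\cong\Ext_R^{j+i}(\tr\syz^iM,R)$, and plugging in the known vanishings $\Ext_R^k(\tr\syz^iM,R)=0$ for $1\le k\le b$ only recovers $\Ext_R^j(\tr M,R)=0$ for $j\le b-i$, i.e.\ \emph{less} than you started with. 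The ``bookkeeping like 5a'' cannot help: in Proposition~\ref{tf-syz}(5a) the whole point is to show that the nonzero term $\Ext_R^1(M,R)$ in the AB sequence has grade exceeding $\depth R$, which forces it to vanish; here that term is already zero, so there is nothing left to squeeze, and in any case (1) is stated over an arbitrary noetherian ring with no depth hypothesis to play against.

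The paper avoids this entirely by working at the level of categories rather than a fixed $M$. Stage~1 gives the equality $\G_{a,b}=\G_{\infty,b}$. Applying $\tr$ (via \cite[Proposition 1.1.1]{I}) transports this to $\G_{b,a}=\G_{b,\infty}$, and then applying the stable equivalence $\syz^{b-a}$ from the same reference transports it back to $\G_{a,b}=\G_{a,\infty}$. Intersecting the two equalities gives $\G_{a,b}=\G_{\infty,b}\cap\G_{a,\infty}=\G_{\infty,\infty}=\gp(R)$. This categorical shift is the missing idea in your outline; the module-by-module iteration of the AB sequence does not supply it.
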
  
 
\begin{proof}
(1) For the first assertion, it suffices to show the ``only if'' part.
Let $M\in\G_{a,b}$, and $i\ge0$ an integer.
By assumption, $\syz^iM$ is in $\G_{a,b}$.
As $a>0$, we have $\syz^iM\in\G_{1,0}$, that is, $\Ext_R^{i+1}(M,R)=\Ext_R^1(\syz^iM,R)=0$.
Hence $M\in\G_{\infty,0}$, and thus $M\in\G_{a,b}\cap\G_{\infty,0}=\G_{\infty,b}$.
It follows that $\G_{a,b}=\G_{\infty,b}$.
We are done for $b=\infty$, so assume $b<\infty$.
We get $\G_{b,a}=\tr\G_{a,b}=\tr\G_{\infty,b}=\G_{b,\infty}$ by \cite[Proposition 1.1.1]{I}.
Note that $b-a$ is an integer.
Hence $\syz^{b-a}$ is defined, and we obtain $\G_{a,b}=\syz^{b-a}\G_{b,a}=\syz^{b-a}\G_{b,\infty}=\G_{a,\infty+b-a}=\G_{a,\infty}$ by \cite[Proposition 1.1.1]{I} again.
It follows that $\G_{a,b}=\G_{a,\infty}$, and $\G_{a,b}=\G_{a,\infty}\cap\G_{\infty,b}=\G_{\infty,\infty}=\gp(R)$.

(2) If $n<t$, then $k$ is in $\G_{n,0}$ and so does $\syz^{t-n}k$ by assumption, which implies $\Ext_R^t(k,R)=0$, a contradiction.
Hence $n\ge t$.
Suppose $n\le t+1$.
As $\syz^{t+1}k\in\tf_{t+1}(R)$ by Theorem \ref{syz}(1), applying (1) shows $\tr \syz^{t+1}k\in\G_{t+1,0}\subseteq\G_{n,0}=\gp(R)$.
Thus $\syz^{t+1}k\in\gp(R)$, which implies that $R$ is Gorenstein.
\end{proof} 

\begin{cor}\label{5-8}
Let $n\ge0$ be an integer such that $\tf_{n}(R)=\tf_{n+1}(R)$.
Then $\tf_n(R)=\gp(R)$.
\end{cor}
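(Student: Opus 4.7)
My plan is to reduce Corollary~\ref{5-8} to Proposition~\ref{7-5}(1) via Auslander transpose duality. First I would translate the hypothesis into a statement about $\G_{n,0}$. Because $\tr\tr M\approx M$ and each $\G_{a,b}$ is stable under adding or removing projective summands, the assignment $M\mapsto\tr M$ is a bijection on $\mod R$ modulo projective summands, and by the very definition $M\in\tf_n(R)=\G_{0,n}$ means $\tr M\in\G_{n,0}$. Thus the equality $\tf_n(R)=\tf_{n+1}(R)$ is equivalent to $\G_{n,0}=\G_{n+1,0}$, i.e.\ every $R$-module $N$ with $\Ext_R^i(N,R)=0$ for $1\le i\le n$ also satisfies $\Ext_R^{n+1}(N,R)=0$.

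Assuming $n\ge1$, I would next iterate this implication along syzygies. For $N\in\G_{n,0}$ we have $\Ext_R^i(\syz N,R)=\Ext_R^{i+1}(N,R)=0$ for $1\le i\le n$ (using the hypothesis for $i=n$), so $\syz N\in\G_{n,0}$, and applying the hypothesis again gives $\Ext_R^{n+2}(N,R)=\Ext_R^{n+1}(\syz N,R)=0$. Iterating, $\Ext_R^i(N,R)=0$ for every $i\ge1$, so $\G_{n,0}=\G_{\infty,0}$. This category is obviously closed under syzygies (shifting preserves vanishing of all $\Ext^i(-,R)$), hence Proposition~\ref{7-5}(1) applied with $a=n$ and $b=0$ yields $\G_{n,0}=\gp(R)$.

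Now for any $M\in\tf_n(R)$ we have $\tr M\in\G_{n,0}=\gp(R)$, and since $\gp(R)=\G_{\infty,\infty}$ is defined by a condition symmetric in $M$ and $\tr M$, this forces $M\approx\tr\tr M\in\gp(R)$; the reverse inclusion $\gp(R)\subseteq\tf_n(R)$ is immediate. The case $n=0$ is handled by the same iteration: it gives $\mod R=\G_{0,0}=\G_{\infty,0}$, which forces $R$ to have injective dimension zero, hence to be artinian Gorenstein, and then $\tf_0(R)=\mod R=\gp(R)$. The main point that requires attention is the clean bookkeeping of projective summands through $\tr$; once that translation is established, the rest of the argument is a direct application of Proposition~\ref{7-5}(1).
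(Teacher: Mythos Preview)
Your proof is correct and follows essentially the same route as the paper: transpose to convert $\tf_n(R)=\tf_{n+1}(R)$ into $\G_{n,0}=\G_{n+1,0}$, verify closure under syzygies, and invoke Proposition~\ref{7-5}(1). The paper applies Proposition~\ref{7-5}(1) directly to $\G_{n+1,0}$ (with $a=n+1\ge1$), which avoids both your intermediate iteration to $\G_{\infty,0}$ and the separate treatment of the case $n=0$.
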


\begin{proof}
Using \cite[Proposition 1.1.1]{I} shows $\syz\G_{n+1,0}=\G_{n,1}\subseteq\G_{n,0}=\tr\tf_n(R)=\tr\tr_{n+1}(R)=\G_{n+1,0}$.
Proposition \ref{7-5}(1) yields $\G_{n+1,0}=\gp(R)$.
We conclude $\tf_n(R)=\tf_{n+1}(R)=\tr\G_{n+1,0}=\gp(R)$.
\end{proof}

\begin{rem}
The dual version of Proposition \ref{7-5}(1) holds true as well:
Let $0\le a\le\infty$ and $0<b<\infty$.
Then $\G_{a,b}$ is closed under cosyzygies if and only if $\G_{a,b}=\gp(R)$.
This is a consequence of the combination of Proposition \ref{7-5}(1), \cite[Lemma 4.1]{cosyz} and \cite[Proposition 1.1.1]{I}.
\end{rem}


\end{document}